\documentclass[11pt]{article}

\usepackage{amssymb,amsmath,amsfonts,amsthm,mathrsfs}
\usepackage{enumerate}
\usepackage[colorlinks=true, pdfstartview=FitV, linkcolor=blue, citecolor=blue, urlcolor=blue]{hyperref}
\usepackage{graphicx,color}
\usepackage{cite}
\usepackage{indentfirst}
\allowdisplaybreaks[4]

\textwidth=15cm
\textheight=21cm
\oddsidemargin 0.46cm
\evensidemargin 0.46cm

\begin{document}
\setlength{\parindent}{2em}

\newtheorem{thm}{Theorem}
\newtheorem*{thmA}{Theorem A}

\newtheorem{lem}{Lemma}
\newtheorem*{lemA}{Lemma A}  \newtheorem*{lemB}{Lemma B}

\newtheorem*{cor}{Corollary}

\theoremstyle{definition}
\newtheorem{definition}{Definition}

\newtheorem{remark}{Remark}
\numberwithin{remark}{section}
\numberwithin{thm}{section}
\numberwithin{lem}{section}
\numberwithin{definition}{section}
\numberwithin{equation}{section}

\def\rn{{\mathbb R^n}}  \def\sn{{\mathbb S^{n-1}}}

\title{\bf\Large LIMITING WEAK-TYPE BEHAVIORS FOR SINGULAR INTEGRALS WITH ROUGH $L\log L(\sn)$ KERNELS
\footnotetext{{\it Key words and phrases}. Singular integral operator, bilinear singular integral operator, limiting weak-type behaviors, $L\log L(\sn)$ rough kernel.
\newline\indent\hspace{1mm} {\it 2020 Mathematics Subject Classification}. Primary 42B20; Secondary 42B25.
\newline\indent\hspace{1mm} The authors were supported partly by NSFC (Nos. 11771358, 11871101).}}

\date{}

\author{Moyan Qin, Huoxiong Wu, Qingying Xue\footnote{Corresponding
author, E-mail: \texttt{qyxue@bnu.edu.cn}}}

\maketitle

\begin{center}
	\begin{minipage}{13cm}
	{\small {\bf Abstract}\quad
	Let \(\Omega\) be a function of homogeneous of degree zero and vanish on the unit sphere \(\sn\). In this paper, we investigate the limiting weak-type behavior for singular integral operator $T_\Omega$ associated with rough kernel \(\Omega\). We show that, if \(\Omega\in L\log L(\sn)\), then
	\[\lim_{\lambda\to0^+}\lambda|\{x\in\rn:|T_\Omega(f)(x)|>\lambda\}| = n^{-1}\|\Omega\|_{L^1(\sn)}\|f\|_{L^1(\rn)},\quad0\le f\in L^1(\rn).\]
	Moreover, \(n^{-1}\|\Omega\|_{L^1(\sn)}\) is a lower bound of weak-type norm of $T_\Omega$ when \(\Omega\in L\log L(\sn)\). Corresponding results for rough bilinear singular integral operators defined in the form \(T_{\vec\Omega}(f_1,f_2) = T_{\Omega_1}(f_1)\cdot T_{\Omega_2}(f_2)\) have also been established.}
\end{minipage}
\end{center}

\section{Introduction}\label{s1}


It is well-known that the existence and boundedness of singular integral operator were first investigated by Calder\'on and Zygmund \cite{CZ1952,CZ1956}. Let \(\Omega\in L^1(\sn)\) and satisfy the following vanishing condition:
\begin{equation}\label{eq1.1}
	\int_{\sn}\Omega(\theta)d\sigma(\theta)=0,
\end{equation}
where $\sn$ denotes the unit sphere on the Euclidean space $\rn$, $d\sigma(\cdot)$ is the induced Lebesgue measure on $\sn$. It was shown in \cite{CZ1956} that the rough singular integral operator \(T_\Omega\) defined by
\[T_\Omega(f)(x)=\text{p.v.}\int_\rn\frac{\Omega(y/|y|)}{|y|^n}f(x-y)dy\]
is bounded on \(L^p(\rn)\) for \(1<p<\infty\), provided that $\Omega\in L\log L(\sn)$, where
$$ L\log L(\sn):=\{\Omega\in L^1(\sn):\,
\|\Omega\|_{L\log L(\sn)}:=\int_{\sn}|\Omega(\theta)|\log(e+|\Omega(\theta)|)d\sigma(\theta)<\infty\}.$$
Moreover, if \(\Omega\) is an odd function on \(\sn\), \(\Omega\in L^1(\sn)\) is sufficient to guarantee the \(L^p\)-boundedness of \(T_\Omega\).

In 1979, Connett \cite{C1979}, Ricci and Weiss \cite{RW1979} independently proved that \(T_\Omega\) is still $L^p$ bounded when \(\Omega\) belongs to a larger space \(H^1(\sn)\). Here \(H^1(\sn)\) denotes the Hardy space on \(\sn\) which contains \(L\log L(\sn)\) as a proper subspace. The result in \cite{C1979,RW1979} can be formulated in the following way.

\begin{thmA}[\cite{C1979,RW1979}]
Let \(\Omega\) be an integrable function on \(\sn\) with mean value zero which satisfies \(\Omega\in H^1(\sn)\). Then \(T_\Omega\) can be extended to be a bounded operator from \(L^p(\rn)\) into itself for \(1<p<\infty\). More precisely, there exists a positive constant $C$, independent of $\Omega$ and $f$, such that
\begin{equation}\label{eq1.2}
	\|T_\Omega(f)\|_{L^p(\rn)} \le C\|\Omega\|_{H^1(\sn)}\|f\|_{L^p(\rn)}.
\end{equation}
\end{thmA}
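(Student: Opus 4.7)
The plan is to combine the atomic decomposition of the Hardy space $H^1(\sn)$ with a Fourier analytic estimate on the kernel. First, I would invoke the $(1,\infty)$-atomic decomposition of $H^1(\sn)$: any $\Omega$ with vanishing mean can be written as $\Omega=\sum_{j}\lambda_{j}a_{j}$, where each $a_{j}$ is a regular atom supported in a spherical cap $S(\theta_{j},\rho_{j})\subset\sn$, satisfies $\int_{\sn}a_{j}\,d\sigma=0$ and $\|a_{j}\|_{L^{\infty}(\sn)}\le\rho_{j}^{-(n-1)}$, and $\sum_{j}|\lambda_{j}|\lesssim\|\Omega\|_{H^{1}(\sn)}$. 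Since $T_{\Omega}$ depends linearly on $\Omega$, the triangle inequality reduces \eqref{eq1.2} to the uniform bound $\|T_{a}\|_{L^{p}\to L^{p}}\le C$ for every atom $a$, with $C$ independent of the cap.

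Second, by rotational invariance I may take $\theta_{0}=e_{n}$. Perform a dyadic annular decomposition $K(y)=\sum_{k\in\mathbb{Z}}K_{k}(y)$ with $K_{k}(y)=\frac{a(y/|y|)}{|y|^{n}}\chi_{\{2^{k}\le|y|<2^{k+1}\}}$. Exploiting the cancellation $\int a\,d\sigma=0$ together with the $L^{\infty}$-size of the atom on the cap of radius $\rho$ (and a standard integration-by-parts/van der Corput type argument on the sphere), I expect to establish the two-sided Fourier estimate
\[
|\widehat{K_{k}}(\xi)|\lesssim\min\bigl\{2^{k}\rho|\xi|,\;(2^{k}|\xi|)^{-\alpha}\bigr\}
\]
for some $\alpha>0$. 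Summing a geometric series in $k$ then yields $\|\widehat{K}\|_{L^{\infty}(\rn)}\le C$ uniformly in the atom, and by Plancherel the operator $T_{a}$ is bounded on $L^{2}(\rn)$ with a uniform constant.

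Third, to pass from $L^{2}$ to $L^{p}$ with $1<p<\infty$, I would combine the kernel decomposition with Littlewood--Paley theory. Introducing a smooth dyadic partition of unity $\{\psi_{j}(D)\}$ in the frequency variable, the Fourier estimate on $\widehat{K_{k}}$ yields almost-orthogonal bounds of the form $\|T_{K_{k}}\psi_{j}(D)\|_{L^{p}\to L^{p}}\le C\,2^{-\varepsilon|j-k|}$; a vector-valued Littlewood--Paley / Cotlar-type summation then delivers the uniform $L^{p}$ estimate. Finally, summing $\sum_{j}\lambda_{j}T_{a_{j}}$ gives the desired inequality with constant $C\|\Omega\|_{H^{1}(\sn)}$.

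The principal obstacle is the Fourier transform estimate for the single dyadic piece $\widehat{K_{k}}$ with the sharp $\rho$-dependence: one must simultaneously use the atomic cancellation (to obtain decay as $2^{k}|\xi|\to 0$) and a non-trivial oscillatory estimate on the cap (to obtain decay as $2^{k}|\xi|\to\infty$) in a way that is uniform in the cap location and radius. Once this kernel-level estimate is in hand, the Littlewood--Paley summation and the reduction via the atomic decomposition are essentially formal.
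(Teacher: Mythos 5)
The paper itself does not prove Theorem A: it is quoted from Connett and Ricci--Weiss, and Remark 1.1 merely notes that the explicit bound $C\|\Omega\|_{H^1(\sn)}$ can be extracted by following \cite{GS1999,RW1979} or obtained as a special case of \cite{LMW2016}. Your architecture --- atomic decomposition of $H^1(\sn)$, reduction to a bound for a single atom uniform in the cap, dyadic decomposition of the kernel, Fourier estimates plus a Littlewood--Paley/interpolation summation in the style of Duoandikoetxea--Rubio de Francia --- is exactly the route those references take, so at the level of strategy you are aligned with the intended proof.

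However, the central estimate you propose is false as stated, and this is a genuine gap, not a technicality. For an $L^\infty$-normalized atom $a$ supported in a cap of radius $\rho$, the decay $|\widehat{K_k}(\xi)|\lesssim(2^k|\xi|)^{-\alpha}$ cannot hold uniformly in $\rho$: the atom has no smoothness, so transverse to the cap the only oscillation available comes from the support width, and it is felt only when $2^k\rho|\xi|\gtrsim1$, not when $2^k|\xi|\gtrsim1$. Concretely, on $\mathbb S^1$ take $\theta_0=e_2$, parametrize $\theta=(\sin\varphi,\cos\varphi)$, let $a=\rho^{-1}\bigl(\chi_{\{0<\varphi\le\rho\}}-\chi_{\{-\rho\le\varphi<0\}}\bigr)$ (a cancellative atom up to normalization) and $\xi=e_1$. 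Writing $\widehat{K_k}(e_1)=\rho^{-1}\int_0^\rho\bigl(G_k(\sin\varphi)-G_k(-\sin\varphi)\bigr)\,d\varphi$ with $G_k(t)=\int_{2^k}^{2^{k+1}}e^{-2\pi irt}\,dr/r$, one has $G_k(t)-G_k(-t)=-2i\int_{2^k}^{2^{k+1}}\sin(2\pi rt)\,dr/r$, and for $2^k=c/\rho$ with $c$ a small absolute constant all these terms have the same sign and size $\gtrsim 2^k\varphi$, giving $|\widehat{K_k}(e_1)|\gtrsim 2^k\rho=c>0$ independently of $\rho$, whereas your bound would force $|\widehat{K_k}(e_1)|\lesssim\rho^{\alpha}$. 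The correct estimates are cap-adapted and anisotropic, e.g.\ $|\widehat{K_k}(\xi)|\lesssim\min\{2^k\rho|\xi|,(2^k\rho|\xi|)^{-\alpha}\}$ transversally to $\theta_0$, with radial oscillation $(2^k|\xi\cdot\theta_0|)^{-1}$ controlling the longitudinal direction; consequently your almost-orthogonality claim $\|T_{K_k}\psi_j(D)\|_{L^p\to L^p}\lesssim2^{-\varepsilon|j-k|}$ with a standard isotropic partition is not uniform in the atom either --- the orthogonality is centered at $j+k\approx\log(1/\rho)$. The scheme can be repaired, since the Duoandikoetxea--Rubio de Francia summation is invariant under this shift of scales and its $L^p$ ingredient, $\sup_k\bigl||\sigma_k|*f\bigr|\lesssim\int_{\sn}|a(\theta)|M_\theta f\,d\sigma(\theta)$ with $\|a\|_{L^1}\le1$, is uniform in the atom; but this anisotropic bookkeeping (equivalently, grouping the dyadic shells into blocks of length about $\log(1/\rho)$, treating the atom's kernel as an anisotropically dilated Calder\'on--Zygmund kernel) is precisely the non-formal heart of \cite{RW1979,GS1999,LMW2016} and is missing from your outline. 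A secondary point: the atomic decomposition on the compact manifold $\sn$ involves an exceptional, non-cancellative atom, and you must use the vanishing moment of $\Omega$ to remove it, since for an atom without cancellation $T_a$ is not even well defined as a principal value.
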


\begin{remark}
The precise constant on the right side of inequality (\ref{eq1.2}) was not given explicitly in \cite{C1979} and \cite{RW1979}. However, one can prove inequality (\ref{eq1.2}) by following the main steps in \cite{GS1999,RW1979}. On the other hand, inequality (\ref{eq1.2}) is a special case of Theorem 1 of \cite{LMW2016}, where the singular integral operators related to homogeneous mappings were studied.
\end{remark}

Since then, great achievements have been made in this area. Among them are the celebrated works of Christ \cite{C1988}, Christ and Rubio de Francia \cite{CR1988}, Hofmann \cite{H1988}, Seeger \cite{S1996}, Tao \cite{T1999}, etc. In \cite{C1988}, Christ established the weak type \((1,1)\) boundedness of Bochner-Riesz means and rough Hardy-Littlewood maximal operator. The weak type $(1,1)$ boundedness of $T_\Omega$ was given by Christ and Rubio de Francia \cite{CR1988} when \(\Omega\in L\log L(\sn)\) for \(n\le5\), and independently obtained by Hofmann \cite{H1988} when \(\Omega\in L^q(\mathbb S^1)\) for \(n=2\). It was eventually proved by Seeger \cite{S1996} and Tao \cite{T1999} that \(T_\Omega\) is of weak type \((1,1)\) when \(\Omega\in L\log L(\sn)\) for all dimensions. More related works, in particular, weighted version of the above results, can be found in references \cite{D1993,DR1986,HRT2017,LPRR2019,V1996,W1990}.


Here we will focus on the best constants problem of weak endpoints estimates for some important operators in Harmonic analysis. This belongs to less fine problems and has attracted lots of attentions. For example, for $n=1$, Davis \cite{D1974} obtained the best constant of weak-type $(1,1)$ for Hilbert transform, and Melas \cite{M2003} proved that $\|M\|_{L^1\to L^{1,\infty}}=\frac{11+\sqrt{61}}{11}$ for Hardy-Littlewood maximal operator \(M\). However, for $n\ge2$, things become more subtle. Even for the well-known Riesz transform, there is no such information.

In 2004, Janakiraman \cite{J2004} considered the Riesz transform and the singular integral operator $T_\Omega$. It was shown that the constant $\|T_\Omega\|_{L^1\to L^{1,\infty}}$ is at worst $C(\log n\|\Omega\|_{L^1}+\|T_\Omega\|_{L^2\to L^2}^2)$, if $\Omega$ satisfies (\ref{eq1.1}) and the new regularity condition that
\begin{equation}\label{eq1.3}
	\sup_{\xi\in\sn}\int_\sn|\Omega(\theta)-\Omega(\theta+\delta\xi)| d\sigma(\theta) \le Cn\delta\int_\sn|\Omega(\theta)|d\sigma(\theta),\quad\text{for }0<\delta<\frac1n.
\end{equation}
To explore the lower bounds of $\|T_\Omega\|_{L^1\to L^{1,\infty}}$, in 2006, under the same kernel conditions, Janakiraman \cite{J2006} gave the following limiting weak-type behavior for $T_\Omega$:
$$\lim_{\lambda\to0^+}\lambda|\{x\in\rn:|T_\Omega(f)(x)|>\lambda\}|= \frac{\|\Omega\|_{L^1(\sn)}}n\|f\|_{L^1(\rn)},\quad\text{for }0\le f\in L^1(\rn).$$
Similar result have also been obtained for Hardy-Littlewood maximal function $M$.

Subsequently, Janakiraman's results were essentially improved by Ding and Lai \cite{DL2017} in the way that the kernel condition (\ref{eq1.3}) was replaced by
a weaker $L^1$-Dini condition, that is, $\Omega\in L^1(\sn)$ and satisfies
$$\int_0^1\frac{\omega_1(\delta)}\delta d\delta<\infty,$$
where $\omega_1(\delta)=\sup_{|h|\le\delta}\int_\sn|\Omega(\theta+h)-\Omega(\theta)|d\sigma(\theta)$ for $\delta>0$.
They also proved the similar results for maximal type operators with Dini type kernels, see \cite{DL2017.}.


As we mentioned before \cite{S1996,T1999}, \(\Omega\in L\log L(\sn)\) is a sufficient condition to guarantee the weak-type $(1,1)$ boundedness of $T_\Omega$. However, both the results of Janakiraman \cite{J2006}, Ding and Lai \cite{DL2017} were obtained under certain smoothness conditions. Therefore, one may ask, whether these smoothness conditions can be removed. Note that, the following space inclusion relationships are true:
\[L^1-\text{Dini}\subsetneq L\log L(\sn) \subsetneq H^1(\sn)\subsetneq L^1(\sn).\]
Thus it is quite natural to ask the following question:

\noindent{\bf Questions 1:} Does $T_\Omega$ still enjoy the limiting weak-type behaviors when \(\Omega\in L\log L(\sn)\)?


In this paper, we give a positive answer to this question. Our first main result is as follows:

\begin{thm}\label{thm1}
Let $\Omega\in L\log L(\sn)$ and satisfy the vanishing condition \((\ref{eq1.1})\). Then for all $0\le f\in L^1(\rn)$, it holds that
\begin{enumerate}[\rm{(i)}]
	\item $\displaystyle\lim_{\lambda\to0^+}\lambda|\{x\in\rn:|T_\Omega(f)(x)|>\lambda\}| = \frac1n\|\Omega\|_{L^1(\sn)}\|f\|_{L^1(\rn)};$
\item[\rm{(ii)}] $\displaystyle\lim_{\lambda\to0^+}\lambda\big|\big\{x\in\rn:\big||T_\Omega(f)(x)| - \|f\|_{L^1(\rn)}|\Omega(x)||x|^{-n}\big|>\lambda\big\}\big|=0.$
\end{enumerate}
\end{thm}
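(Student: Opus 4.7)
The strategy is to prove (ii) first and deduce (i) from it by a standard distribution-function argument. Set
\[
g(x)\;:=\;\|f\|_{L^1(\rn)}\,|\Omega(x/|x|)|\,|x|^{-n};
\]
a direct computation in polar coordinates yields the exact identity $\lambda\,|\{x\in\rn:g(x)>\lambda\}|=n^{-1}\|\Omega\|_{L^1(\sn)}\|f\|_{L^1(\rn)}$ for every $\lambda>0$. Assuming (ii), the inclusions
\[
\{|T_\Omega f|>\lambda\}\;\subset\;\{g>(1-\varepsilon)\lambda\}\cup\bigl\{\bigl||T_\Omega f|-g\bigr|>\varepsilon\lambda\bigr\},
\]
\[
\{g>\lambda\}\;\subset\;\{|T_\Omega f|>(1-\varepsilon)\lambda\}\cup\bigl\{\bigl||T_\Omega f|-g\bigr|>\varepsilon\lambda\bigr\}
\]
(valid for any $\varepsilon\in(0,1)$), combined with the identity above, sandwich $\lambda|\{|T_\Omega f|>\lambda\}|$ between $(1-\varepsilon)\cdot n^{-1}\|\Omega\|_{L^1(\sn)}\|f\|_{L^1(\rn)}$ and $(1-\varepsilon)^{-1}\cdot n^{-1}\|\Omega\|_{L^1(\sn)}\|f\|_{L^1(\rn)}$ in the limit; letting $\varepsilon\to 0^+$ gives (i).

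For (ii) I would approximate $\Omega$ by a smooth kernel. Given $\varepsilon>0$, use the density of $C^\infty(\sn)$ in $L\log L(\sn)$ to choose $\Omega_1\in C^\infty(\sn)$ with $\int_{\sn}\Omega_1\,d\sigma=0$ and $\|\Omega-\Omega_1\|_{L\log L(\sn)}<\varepsilon$ (the mean-zero constraint can be enforced by subtracting a vanishing constant from the approximant). Put $\Omega_2:=\Omega-\Omega_1$, a mean-zero kernel with $\|\Omega_2\|_{L\log L(\sn)}<\varepsilon$, and decompose $T_\Omega=T_{\Omega_1}+T_{\Omega_2}$. With $g_i(x):=\|f\|_{L^1(\rn)}|\Omega_i(x/|x|)|\,|x|^{-n}$ for $i=1,2$, the reverse triangle inequality yields $\bigl||T_\Omega f|-|T_{\Omega_1}f|\bigr|\le|T_{\Omega_2}f|$ and trivially $|g-g_1|\le g_2$, so
\[
\bigl||T_\Omega f|-g\bigr|\;\le\;|T_{\Omega_2}f|\;+\;\bigl||T_{\Omega_1}f|-g_1\bigr|\;+\;g_2,
\]
which splits the level set $\{||T_\Omega f|-g|>\lambda\}$ into three sets at height $\lambda/3$.

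Each piece is then controlled by existing theory. For the rough tail, Seeger's quantitative weak-type $(1,1)$ bound \cite{S1996,T1999} gives $\|T_{\Omega_2}\|_{L^1\to L^{1,\infty}}\le C\|\Omega_2\|_{L\log L(\sn)}$, hence $\lambda\,|\{|T_{\Omega_2}f|>\lambda/3\}|\le 3C\varepsilon\|f\|_{L^1(\rn)}$ uniformly in $\lambda$. For the smooth piece, $\Omega_1\in C^\infty(\sn)$ is a fortiori $L^1$-Dini, so the Ding--Lai limiting-behavior theorem \cite{DL2017} applied to $\Omega_1$ gives $\lim_{\lambda\to 0^+}\lambda\,|\{||T_{\Omega_1}f|-g_1|>\lambda/3\}|=0$. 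For $g_2$, the polar identity from the first paragraph yields $\lambda\,|\{g_2>\lambda/3\}|=3n^{-1}\|\Omega_2\|_{L^1(\sn)}\|f\|_{L^1(\rn)}\le 3n^{-1}\varepsilon\|f\|_{L^1(\rn)}$. Summing,
\[
\limsup_{\lambda\to 0^+}\lambda\,\bigl|\bigl\{\bigl||T_\Omega f|-g\bigr|>\lambda\bigr\}\bigr|\;\le\;(3C+3/n)\,\varepsilon\,\|f\|_{L^1(\rn)},
\]
and since $\varepsilon>0$ is arbitrary this proves (ii).

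The main obstacle is having the weak-type $(1,1)$ norm of $T_{\Omega_2}$ controlled \emph{quantitatively} by $\|\Omega_2\|_{L\log L(\sn)}$: a mere qualitative $H^1$-boundedness (as in Connett and Ricci--Weiss) would not suffice, since shrinking an $H^1$-neighbourhood of $\Omega$ does not by itself force the corresponding error $T_\Omega f$ to be small in $L^{1,\infty}$ with a useful constant. Seeger's theorem provides exactly the required quantitative dependence on $\|\Omega\|_{L\log L(\sn)}$, which is why $L\log L(\sn)$ is the natural class here: it is simultaneously large enough to contain the $L^1$-Dini kernels handled by Ding--Lai and small enough for the Seeger--Tao weak-type $(1,1)$ estimate to hold with proportional constant. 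Once these two ingredients are in place, the proof reduces to the three-way triangle inequality above.
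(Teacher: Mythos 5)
The decisive step of your argument --- the claim that Seeger's theorem gives the proportional bound $\|T_{\Omega_2}\|_{L^1\to L^{1,\infty}}\le C\|\Omega_2\|_{L\log L(\sn)}$ --- is a genuine gap, and it is exactly the crux of the whole problem. What Seeger and Tao actually yield, once constants are tracked, is a bound of the form $C(\|\Omega_2\|_{L\log L(\sn)}+1)$; the additive constant does not shrink as $\|\Omega_2\|_{L\log L(\sn)}\to0$, so your estimate $\lambda|\{x:|T_{\Omega_2}f(x)|>\lambda/3\}|\le 3C\varepsilon\|f\|_{L^1(\rn)}$ is unjustified: you only get a fixed constant times $\|f\|_{L^1(\rn)}$, the $\limsup$ bound $(3C+3/n)\varepsilon\|f\|_{L^1(\rn)}$ does not follow, and the argument does not close. (Note also that $\int_{\sn}|\Omega_2|\log(e+|\Omega_2|)\,d\sigma$ is not homogeneous in $\Omega_2$, so a weak-type norm literally proportional to it is not the natural quantitative statement.) The paper resolves this by a different route, Lemma \ref{lem23}: one first proves $\|\Omega\|_{H^1(\sn)}\le C\mathcal C_\Omega$ via the spherical Hardy--Littlewood maximal function, then uses the $H^1(\sn)$ bound of Theorem A to get $\|T_{\Omega}\|_{L^2\to L^2}\le C\mathcal C_{\Omega}$, and feeds this into the Ding--Lai (2019) weak $(1,1)$ criterion to obtain $\|T_{\Omega_2}\|_{L^1\to L^{1,\infty}}\le C\mathcal C_{\Omega_2}$; finally $\mathcal C_{\Omega_2}\le(3+4/e)\varepsilon^{3/4}$ when $\|\Omega_2\|_{L\log L(\sn)}\le\varepsilon$, because the non-homogeneous term $\|\Omega_2\|_{L^1(\sn)}\log^+\big(1/\|\Omega_2\|_{L^1(\sn)}\big)$ is tamed by $\alpha\log^+(1/\alpha)\le 4\alpha^{3/4}/e$. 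Unless you supply this (or an equivalent) substitute whose weak $(1,1)$ bound tends to $0$ with the $L\log L$ norm of the remainder, the rough piece cannot be absorbed.

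A secondary point: for the smooth piece you quote Ding--Lai (2017) to conclude $\lim_{\lambda\to0^+}\lambda|\{x:\big||T_{\Omega_1}f(x)|-g_1(x)\big|>\lambda/3\}|=0$. Their $L^1$-Dini theorem is stated as the type-(i) limit for $\lambda|\{|T_{\Omega_1}f|>\lambda\}|$; you need the type-(ii) statement, so you must either verify that it is actually proved there or prove it yourself, which is what the paper does directly: truncate $f$ to a ball, use uniform continuity of the continuous approximating kernel to squeeze $|T_{\Omega_1}(f\chi_{B})(x)|$ between $(1\pm\sqrt\varepsilon)^{-n}$-type multiples of $|\Omega_1(x/|x|)|\,|x|^{-n}$ for $|x|$ large, with a localization to the set $\{\theta:|\Omega_1(\theta)|>\varepsilon\}$ to make the lower comparison legitimate. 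Apart from these two issues, your architecture --- prove (ii) first, deduce (i) from the exact polar identity of Lemma \ref{lem21}, and split $\Omega$ into a smooth mean-zero part plus a small mean-zero remainder --- is sound and parallels the paper's strategy.
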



Before stating our second main result, let's recall some background for multilinear operators. It was Coifman and Grafakos \cite{CG1992,CG1992.} who first studied the bilinear singular integral operators given by inner products of finite vectors of Calder\'{o}n-Zygmund operators as follows:
$$T_{\vec\Omega}(f_1,f_2)(x) = T_{\Omega_1}(f_1)(x)T_{\Omega_2}(f_2)(x).$$
It was originated from investigating the boundedness of the determinant of the Jacobian \cite{CLMS1993}. They \cite{CG1992,CG1992.} proved that \(T_{\vec\Omega}\) is bounded from the product spaces of the Hardy spaces to the Hardy space \(H^r(\rn)\) and the weak Hardy space \(H^{r,\infty}(\rn)\), and from the product spaces of the Lebesgue spaces to the Hardy space \(H^r(\rn)\) and the weak Hardy space \(H^{r,\infty}(\rn)\) for different \(p\) and \(q\) when \(\Omega\) satisfies a certain number of moments vanishing and a strong smoothness condition.

Inspired by these works, Ding and Lu \cite{DL1999} essentially improved the result in \cite{CG1992.} by replacing their smoothness kernel condition to a weaker  \(L^s\)-Dini condition for \(s>1\). Later on, Ding and Lu \cite{DL2003} extended the result in \cite{DL1999} to multilinear fractional integral operators.


On the other hand, when \(\Omega_1,\Omega_2\in L\log L(\sn)\), it is easy to see that the \(L^1\times L^1\to L^{1/2,\infty}\) boundedness of \(T_{\vec\Omega}\) holds (see Lemma \ref{lem23}). Therefore the general question along these lines is the following:

\noindent{\bf Question 2:} What kinds of limiting weak-type behaviors does $T_{\vec\Omega}$ enjoy when \(\Omega_1,\Omega_2\in L\log L(\sn)\)?


This question will be addressed by our next theorem.

\begin{thm}\label{thm2}
Let $\Omega_1,\Omega_2\in L\log L(\sn)$ and both of them satisfy the vanishing condition \((\ref{eq1.1})\). Then for all $0\le f_1,f_2\in L^1(\rn)$, we have
\begin{enumerate}[\rm{(i)}]
	\item $\displaystyle\lim_{\lambda\to0^+}\lambda^{1/2}|\{x\in\rn:|T_{\vec\Omega}(f_1,f_2)(x)|>\lambda\}| = \frac1n\|\Omega_1\Omega_2\|_{L^{1/2}(\sn)}^{1/2}\prod\limits_{i=1}^2\|f_i\|_{L^1(\rn)}^{1/2};$
\item[\rm{(ii)}] $\displaystyle\lim_{\lambda\to0^+}\lambda^{1/2}\big|\big\{x\in\rn:\big||T_{\vec\Omega}(f_1,f_2)(x)| - \prod\limits_{i=1}^2\big(\|f_i\|_{L^1(\rn)}|\Omega_i(x)|\big)|x|^{-2n}\big|>\lambda\big\}\big|=0.$
\end{enumerate}
\end{thm}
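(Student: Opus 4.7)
The plan is to exploit the factorization $|T_{\vec\Omega}(f_1,f_2)| = |T_{\Omega_1}(f_1)|\cdot|T_{\Omega_2}(f_2)|$ and reduce both parts to Theorem~\ref{thm1}. Write
\[
A_i(x) := |T_{\Omega_i}(f_i)(x)|, \qquad B_i(x) := \|f_i\|_{L^1(\rn)}|\Omega_i(x)||x|^{-n}, \qquad i \in \{1,2\}.
\]
For each $i$, Theorem~\ref{thm1} gives $\lim_{\mu\to 0^+}\mu|\{A_i > \mu\}| = n^{-1}\|\Omega_i\|_{L^1(\sn)}\|f_i\|_{L^1(\rn)}$ and $\lim_{\mu\to 0^+}\mu|\{|A_i - B_i| > \mu\}| = 0$. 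A direct polar-coordinate computation, valid for every $\mu,\lambda > 0$, also yields the two identities
\[
\mu|\{B_i > \mu\}| = \frac{\|\Omega_i\|_{L^1(\sn)}\|f_i\|_{L^1(\rn)}}{n}, \qquad \lambda^{1/2}|\{B_1 B_2 > \lambda\}| = \frac{\|\Omega_1\Omega_2\|_{L^{1/2}(\sn)}^{1/2}}{n}\prod_{i=1}^{2}\|f_i\|_{L^1(\rn)}^{1/2}.
\]
I would prove (ii) first and deduce (i) from it.

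For (ii), the pointwise bound $|A_1A_2 - B_1B_2| \le A_2|A_1-B_1| + B_1|A_2-B_2|$ gives
\[
\{|A_1A_2 - B_1B_2| > \lambda\} \subseteq \{A_2|A_1-B_1| > \lambda/2\} \cup \{B_1|A_2-B_2| > \lambda/2\}.
\]
The crucial step is the elementary inclusion $\{uv > \lambda/2\} \subseteq \{u > \lambda^{1/2}/(2\eta)\}\cup\{v > \eta\lambda^{1/2}\}$, valid for any $\eta > 0$. Applied with $u = |A_1-B_1|$, $v = A_2$ and then multiplied by $\lambda^{1/2}$, the $|A_1-B_1|$ piece tends to $0$ as $\lambda \to 0^+$ by Theorem~\ref{thm1}(ii) (writing $\lambda^{1/2} = 2\eta\cdot\lambda^{1/2}/(2\eta)$), while the $A_2$ piece contributes at most $(n\eta)^{-1}\|\Omega_2\|_{L^1(\sn)}\|f_2\|_{L^1(\rn)}$ by Theorem~\ref{thm1}(i) (writing $\lambda^{1/2} = \eta^{-1}\cdot\eta\lambda^{1/2}$). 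The term $B_1|A_2-B_2|$ is handled symmetrically, using the first identity above to treat the $B_1$ factor. Summing and sending $\eta \to \infty$ yields (ii).

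For (i), the sandwich inclusions
\[
\{A_1A_2 > (1+\epsilon)\lambda\} \subseteq \{B_1B_2 > \lambda\}\cup\{|A_1A_2-B_1B_2|>\epsilon\lambda\},
\]
\[
\{B_1B_2 > (1+\epsilon)\lambda\} \subseteq \{A_1A_2 > \lambda\}\cup\{|A_1A_2-B_1B_2|>\epsilon\lambda\},
\]
valid for each $\epsilon \in (0,1)$, combined with (ii) and the second identity above, sandwich $\lambda^{1/2}|\{A_1A_2 > \lambda\}|$ between quantities that differ from the target value by a factor of $(1+\epsilon)^{\pm 1/2}$; sending $\epsilon \to 0^+$ concludes.

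The real obstacle is the $\eta$-splitting in (ii): the factors $A_i$ and $B_i$ each live at weak-type scale $\lambda$, not $\lambda^{1/2}$, so the product cannot be controlled by a naive H\"older-in-weak-Lebesgue estimate at the correct scale. The parameter $\eta$ trades the \emph{wrong} scaling of the $A_i$ (or $B_i$) side, which produces only $O(1)$ after multiplication by $\lambda^{1/2}$, against the genuine \emph{vanishing} of $\mu|\{|A_i-B_i|>\mu\}|$ supplied by Theorem~\ref{thm1}(ii); the freedom to send $\eta \to \infty$ afterwards is what promotes the resulting $O(1/\eta)$ bound to $0$.
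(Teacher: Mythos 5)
Your proposal is correct, and it takes a genuinely different route from the paper. The paper proves Theorem \ref{thm2} from scratch by rerunning the whole approximation scheme in the bilinear setting: it splits each $f_i$ into a compactly supported piece $g_i$ and a small-norm piece $h_i$, replaces each $\Omega_i$ by a continuous $\Omega_{i,\varepsilon}$ with the vanishing moment restored, controls all cross terms such as $T_{\vec\Omega}(g_1,h_2)$ and $T_{(\Omega_1-\Omega_{1,\varepsilon},\Omega_{2,\varepsilon})}(g_1,g_2)$ via the bilinear weak $L^1\times L^1\to L^{1/2,\infty}$ bound of Lemma \ref{lem23}, and then computes the main term $T_{\vec\Omega_\varepsilon}(g_1,g_2)$ by the same localization-at-infinity argument as in Theorem \ref{thm1}, including an extra step to pass from $\|\prod_i(|\Omega_{i,\varepsilon}|\pm\varepsilon)\|_{L^{1/2}}^{1/2}$ to $\|\Omega_1\Omega_2\|_{L^{1/2}}^{1/2}$. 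You instead treat Theorem \ref{thm1} as a black box for each factor $A_i=|T_{\Omega_i}(f_i)|$, use the exact distributional identities for the model functions $B_i$ and $B_1B_2$ (these are precisely Lemma \ref{lem21} with $\gamma=n$ and $\gamma=2n$; note $\Omega_1\Omega_2\in L^{1/2}(\sn)$ by Cauchy--Schwarz), and handle the mismatch of scales with the $\eta$-splitting $\{uv>\lambda/2\}\subset\{u>\lambda^{1/2}/(2\eta)\}\cup\{v>\eta\lambda^{1/2}\}$, sending $\eta\to\infty$ after $\lambda\to0^+$; part (i) then follows from part (ii) by the standard $(1+\epsilon)$-sandwich. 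All steps check out, including the order of limits (for fixed $\eta$ the limsup is $O(1/\eta)$). What your route buys: it is shorter, avoids the bilinear weak-type bound of Lemma \ref{lem23} and the kernel re-decomposition entirely, and extends verbatim to the $m$-linear case by induction on factors. What the paper's route buys: it is self-contained at the bilinear level and exhibits explicitly the quantitative dependence on $\varepsilon$, $\|\Omega_i\|_{L\log L}$ of each error set, which is the template the authors reuse for part (ii) of both theorems; your argument, by contrast, is only as strong as Theorem \ref{thm1}, but since that theorem is already proved this costs nothing here.
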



An immediate application of Theorem \ref{thm1} and Theorem \ref{thm2} yields the lower bounds of weak norms of $T_\Omega$ and $T_{\vec\Omega}$ as follows.

\begin{cor}\label{cor}
Let $\Omega,\Omega_1,\Omega_2\in L\log L(\sn)$ and all of them satisfy the vanishing condition \((\ref{eq1.1})\). Then, it holds that
	\[\|T_\Omega\|_{L^1\to L^{1,\infty}} \ge \frac1n\|\Omega\|_{L^1(\sn)}\quad\text{and}\quad\|T_{\vec\Omega}\|_{L^1\times L^1\to L^{1/2,\infty}} \ge \frac1n{\|\Omega_1\Omega_2\|_{L^{1/2}(\sn)}^{1/2}}.\]
\end{cor}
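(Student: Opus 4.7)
The corollary follows directly from the two main theorems together with the definition of the weak-type operator (quasi-)norm. My plan is to test the weak-type inequality against an arbitrary fixed nonzero nonnegative input, then send $\lambda \to 0^+$ and extract the lower bound from the precise limit values furnished by Theorem \ref{thm1}(i) and Theorem \ref{thm2}(i), respectively.

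For the linear bound, I would fix any $0 \le f \in L^1(\rn)$ with $\|f\|_{L^1(\rn)} > 0$. By the very definition of the weak-type norm,
\[
\lambda\,|\{x \in \rn : |T_\Omega(f)(x)| > \lambda\}| \le \|T_\Omega\|_{L^1 \to L^{1,\infty}} \|f\|_{L^1(\rn)}
\]
holds for every $\lambda > 0$. Passing to the limit as $\lambda \to 0^+$ on the left-hand side and invoking Theorem \ref{thm1}(i) would yield
\[
\frac{1}{n}\|\Omega\|_{L^1(\sn)} \|f\|_{L^1(\rn)} \le \|T_\Omega\|_{L^1 \to L^{1,\infty}} \|f\|_{L^1(\rn)},
\]
and dividing by $\|f\|_{L^1(\rn)}$ delivers the first stated inequality.

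For the bilinear bound, I would run the same argument one step up. Picking nonzero $0 \le f_1, f_2 \in L^1(\rn)$, the definition of the $L^1 \times L^1 \to L^{1/2,\infty}$ weak-type (quasi-)norm reads
\[
\lambda^{1/2}\,|\{x \in \rn : |T_{\vec\Omega}(f_1,f_2)(x)| > \lambda\}| \le \|T_{\vec\Omega}\|_{L^1 \times L^1 \to L^{1/2,\infty}} \prod_{i=1}^2 \|f_i\|_{L^1(\rn)}^{1/2},
\]
and sending $\lambda \to 0^+$, applying Theorem \ref{thm2}(i), and dividing by $\prod_{i=1}^2 \|f_i\|_{L^1(\rn)}^{1/2}$ produces the claimed lower bound.

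There is essentially no obstacle in the argument; the only subtlety worth flagging is the form of the bilinear weak-type (quasi-)norm, where the exponent $\lambda^{1/2}$ rather than $\lambda$ is the one compatible with $L^{1/2,\infty}$ and is exactly the form in which the limit in Theorem \ref{thm2}(i) is stated, so the two match up cleanly.
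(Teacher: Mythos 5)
Your argument is exactly the paper's intended (one-line) proof: the corollary is stated there as an immediate consequence of Theorems \ref{thm1}(i) and \ref{thm2}(i), obtained by testing the weak-type inequality on a fixed nonnegative $f$ (resp. $f_1,f_2$) and letting $\lambda\to0^+$. Your remark on the bilinear case is apt, since the form $\lambda^{1/2}|\{|T_{\vec\Omega}(f_1,f_2)|>\lambda\}|\le \|T_{\vec\Omega}\|_{L^1\times L^1\to L^{1/2,\infty}}\prod_{i}\|f_i\|_{L^1}^{1/2}$ is precisely the normalization under which the stated constant $\frac1n\|\Omega_1\Omega_2\|_{L^{1/2}(\sn)}^{1/2}$ comes out, matching the corollary.
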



\begin{remark}
For simplicity, we only consider the bilinear case. The same reasoning as in this paper shows that Theorem \ref{thm2} and this corollary can be extended easily to \(m\)-linear operators  for $m\ge 2.$
\end{remark}

One of our main ingredients is to give the fine bounds of $\|T_\Omega\|_{L^1(\mathbb{R}^n)\to L^{1,\infty}(\mathbb{R}^n)}$ for $\Omega\in L\log L(\mathbb{S}^{n-1})$ (see Lemma 3.2 below), which is the key of our proofs. To demonstrate our novelty, 
we now explain a little bit of the main ideas in the proof of Theorem \ref{thm1}. We first split the the function \(f\) into two terms: \(f_1\) has a compact support and \(f_2\) has a small norm. To prove Theorem \ref{thm1}, the first difficulty lies in dealing with the rough \(L\log L\) kernel. We need to decompose \(\Omega\) into \(\widetilde\Omega_\varepsilon\) and \(\Omega-\widetilde\Omega_\varepsilon\): one has the property of continuity on $\sn$ and another has a small norm. Unfortunately, this decomposition may break the vanishing property, therefore we reconstruct the kernels into \(\Omega_\varepsilon\) and \(\Omega-\Omega_\varepsilon\) to make sure that both of them enjoy the vanishing condition. Continue to the proof of Theorem \ref{thm1}, we need to focus on the contributions of \(T_{\Omega-\Omega_\varepsilon}(f_1)\) and \(T_{\Omega_\varepsilon}(f_1)\). For \(T_{\Omega-\Omega_\varepsilon}(f_1)\), the difficulty lies in that we need to prove that the weak \((1,1)\) norm of $T_{\Omega-\Omega_\varepsilon}$ tends to $0$ if \(\|\Omega-\Omega_\varepsilon\|_{L\log L(\sn)}\) tends to $0$. We note that, Seeger and Tao \cite{S1996,T1999} showed that, the weak \((1,1)\) norm of $T_{\Omega-\Omega_\varepsilon}$ can be controlled by \(C(\|\Omega-\Omega_\varepsilon\|_{L\log L(\sn)}+1)\), this does not fit our needs. Fortunately, by the recent work of Ding and Lai \cite{DL2019}, together with (\ref{eq1.2}), we can make the two ends meet. As for \(T_{\Omega_\varepsilon}(f_1)\), we take a viewpoint far away from origin. Then the contribution of \(T_{\Omega_\varepsilon}(f_1)\) is clear when we obtain an upper and a lower control of it. It is worth pointing out that we use a delicate localization method when dealing with the lower control of \(T_{\Omega_\varepsilon}(f_1)\).

The organization of this paper is as follows. In Section \ref{s2}, we will present some notations and basic lemmas, which will be used later. Section \ref{s3} will be devoted to the proof of Theorem \ref{thm1}, and the proof of Theorem \ref{thm2} will be given in Section \ref{s4}. Throughout this paper, the letter $C$ will stand for positive constants, not necessarily the same one at each occurrence, but always independent of $\Omega$ and $f$.

\section{Preliminaries}\label{s2}




In this section, we will recall and establish some auxiliary lemmas. First, we present the following lemma, which will play a fundamental role in our arguments.

\begin{lem}\label{lem21}
Let $\lambda,\gamma>0$, $\Phi\in L^{n/r}(\sn)$, $S\subset\sn$ be a measurable set. Then we have
$$\bigg|\bigg\{ x\in\rn:\frac{|\Phi(\frac x{|x|})|}{|x|^\gamma}>\lambda,\frac{x}{|x|}\in S\bigg\}\bigg|=\frac{\|\Phi\|_{L^{n/\gamma}(S)}^{n/\gamma}}{n\lambda^{n/\gamma}}.$$
\end{lem}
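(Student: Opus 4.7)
The plan is to compute the Lebesgue measure of the set directly by passing to polar coordinates. This is the natural choice because the integrand is radially separable: $|\Phi(x/|x|)|/|x|^\gamma$ decouples completely into an angular factor $|\Phi(\theta)|$ and a radial factor $r^{-\gamma}$, and the angular restriction $x/|x|\in S$ is itself a purely angular condition.

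Concretely, I would substitute $x=r\theta$ with $r>0$ and $\theta\in\sn$, so that $dx=r^{n-1}\,dr\,d\sigma(\theta)$. Under this change of variables the defining inequality becomes
\[
\frac{|\Phi(\theta)|}{r^\gamma}>\lambda\iff 0<r<\left(\frac{|\Phi(\theta)|}{\lambda}\right)^{1/\gamma},
\]
and the condition $x/|x|\in S$ becomes simply $\theta\in S$. Hence the set in question corresponds to
\[
\bigl\{(r,\theta):\theta\in S,\;0<r<(|\Phi(\theta)|/\lambda)^{1/\gamma}\bigr\}.
\]

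I would then apply Fubini/Tonelli (everything is nonnegative, so this is unproblematic) to write the measure as an iterated integral, evaluate the inner radial integral $\int_0^{(|\Phi(\theta)|/\lambda)^{1/\gamma}} r^{n-1}\,dr = \frac{1}{n}(|\Phi(\theta)|/\lambda)^{n/\gamma}$, and pull out the factor $\lambda^{-n/\gamma}$. The remaining angular integral is exactly $\|\Phi\|_{L^{n/\gamma}(S)}^{n/\gamma}$, which is finite by the hypothesis $\Phi\in L^{n/\gamma}(\sn)$ (interpreting the ``$L^{n/r}$'' in the statement as a typo for $L^{n/\gamma}$).

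There is no real obstacle here: polar coordinates reduce the problem to a one-line computation. The only minor point to take care of is making sure the set in question is measurable (which follows from measurability of $\Phi$ and of $S$) so that the polar-coordinate formula applies, and observing that the values $\theta\in S$ with $|\Phi(\theta)|=0$ contribute nothing to either side, so no separate case analysis is needed. This lemma is essentially a bookkeeping identity, and its value lies less in its proof than in its later use as a template for converting level-set measures into $L^p$-norms on the sphere.
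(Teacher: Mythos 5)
Your proposal is correct and is essentially identical to the paper's proof: the paper also writes the set as the union over $\theta\in S$ of the radial segments $0<r<(|\Phi(\theta)|/\lambda)^{1/\gamma}$ and evaluates the same iterated polar-coordinate integral. You also rightly note that ``$L^{n/r}$'' in the statement is a typo for $L^{n/\gamma}$.
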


\begin{proof}
Note that
$$\bigcup_{\theta\in S}\bigg\{ r\theta:0<r<\left|\frac{\Phi(\theta)}{\lambda}\right|^{\frac1\gamma}\bigg\} = \bigg\{ x\in\rn:\frac{|\Phi(\frac x{|x|})|}{|x|^\gamma}>\lambda,\frac{x}{|x|}\in S\bigg\},$$
therefore
$$\bigg|\bigg\{ x\in\rn:\frac{|\Phi(\frac x{|x|})|}{|x|^\gamma}>\lambda,\frac{x}{|x|}\in S\bigg\}\bigg| = \int_S\int_0^{\big|\frac{\Phi(\theta)}{\lambda}\big|^{\frac1\gamma}}r^{n-1}drd\sigma(\theta) = \frac{\|\Phi\|_{L^{n/\gamma}(S)}^{n/\gamma}}{n\lambda^{n/\gamma}}.$$
\end{proof}

Before stating the next lemmas, we need to introduce a more general singular integral operator defined by
\[T_\Omega^K(f)(x) = \text{p.v.}\int_\rn\Omega(x-y)K(x,y)f(y)dy,\]
where \(\Omega\) is homogeneous of degree \(0\) and satisfies the vanishing condition (\ref{eq1.1}), \(K\) satisfies the size condition
\[|K(x,y)| \le \frac C{|x-y|^n},\]
and the regularity condition that for a fixed \(\delta\in(0,1]\),
\begin{align*}
	|K(x_1,y) - K(x_2,y)| \le C\frac{|x_1-x_2|^\delta}{|x_1-y|^{n+\delta}},\quad|x_1-y|>2|x_1-x_2|, \\
	|K(x,y_1) - K(x,y_2)| \le C\frac{|y_1-y_2|^\delta}{|x-y_1|^{n+\delta}},\quad|x-y_1|>2|y_1-y_2|.
\end{align*}

The following weak type \((1,1)\) bound criterion for \(T_\Omega^K\) was given by Ding and Lai.

\begin{lem}[\cite{DL2019}]\label{lem22}
Suppose \(\Omega\in L\log L(\sn)\). In addition, suppose \(\Omega\) and \(K\) satisfy some appropriate cancellation conditions such that \(T_\Omega^K\) is well defined for \(f\in C_c^\infty(\rn)\) and extends to a bounded operator on \(L^2(\rn)\) with bound \(C\|\Omega\|_{L\log L}\). Then for any \(\lambda>0\), we have
\[\lambda|\{x\in\rn:|T_\Omega^K(f)(x)|>\lambda\} \lesssim \mathcal C_\Omega\|f\|_{L^1},\]
where
$$\mathcal C_\Omega = \|\Omega\|_{L\log L(\sn)} + \int_\sn|\Omega(\theta)|\left(1+\log^+\frac{|\Omega(\theta)|}{\|\Omega\|_{L^1(\sn)}}\right)d\sigma(\theta).$$
\end{lem}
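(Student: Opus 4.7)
The plan is to adapt the Seeger--Tao argument for rough singular integrals to accommodate the general kernel $K$, while refining the final constant to $\mathcal C_\Omega$ in place of the cruder $\|\Omega\|_{L\log L(\sn)}+1$ that the standard argument produces. By homogeneity I would first rescale so that $\|\Omega\|_{L^1(\sn)}=1$, in which case $\mathcal C_\Omega$ becomes comparable to $\|\Omega\|_{L\log L(\sn)}$. Decompose $\Omega=\sum_{k\ge 0}\Omega_k$ along the level sets $\Omega_0=\Omega\chi_{|\Omega|\le 1}$ and $\Omega_k=\Omega\chi_{2^{k-1}<|\Omega|\le 2^k}$ for $k\ge 1$, replacing each $\Omega_k$ by $\Omega_k$ minus its spherical average so as to preserve the vanishing condition (\ref{eq1.1}). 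The key counting identity is
\[\sum_{k\ge 0}(k+1)\|\Omega_k\|_{L^1(\sn)}\sim \|\Omega\|_{L^1(\sn)}+\int_\sn|\Omega|\log^+|\Omega|\,d\sigma,\]
which under the above normalization is comparable to $\mathcal C_\Omega$.

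For each fixed $k$ the target is a weak-$(1,1)$ estimate for $T_{\Omega_k}^K$ with constant of order $(k+1)\|\Omega_k\|_{L^1(\sn)}$. After a Calder\'on--Zygmund decomposition of $f$ at height $\lambda$, the good part is handled via the hypothesized $L^2$-bound restricted to the single piece $T_{\Omega_k}^K$, while the bad part receives the Seeger microlocal treatment: insert Littlewood--Paley projections $P_j$ in frequency and further decompose $\Omega_k$ into angular pieces on $\sn$ at scale $2^{-\ell}$. A $TT^\ast$/orthogonality argument then yields $L^2$ bounds that decay geometrically in $|j-j_0(Q)|$ around the natural frequency scale $j_0(Q)$ of each CZ cube $Q$, and Marcinkiewicz interpolation with the trivial $L^1$ mass bound $\|\Omega_k\|_{L^1(\sn)}$ produces, after summation in $j$ and $\ell$, the sought factor $(k+1)$. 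The general kernel $K(x,y)$ contributes only a tame standard-Calder\'on--Zygmund telescoping term, dominated by $\|\Omega\|_{L^1(\sn)}$, thanks to the $C^\delta$-regularity hypotheses on $K$.

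Summing the per-level weak-type bounds in $k$ gives $\sum_k(k+1)\|\Omega_k\|_{L^1(\sn)}\|f\|_{L^1}\lesssim \mathcal C_\Omega\|f\|_{L^1}$, and unnormalizing recovers the stated inequality for general $\Omega$. The hard part will be the calibration that sharpens the usual $\|\Omega\|_{L\log L(\sn)}+1$ of Seeger--Tao to the quantity $\mathcal C_\Omega$ appearing in the statement: one must perform the level-set split relative to $\|\Omega\|_{L^1(\sn)}$ so that the $k=0$ piece contributes only $\|\Omega\|_{L^1(\sn)}$ rather than a universal additive constant $1$, and carefully track the spherical-average correction introduced to restore the vanishing condition so that its contribution is likewise absorbed into $\|\Omega\|_{L^1(\sn)}\le \mathcal C_\Omega$. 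This normalization-aware bookkeeping is precisely what turns the $\log^+|\Omega|$ inside the classical argument into the normalized $\log^+(|\Omega|/\|\Omega\|_{L^1(\sn)})$ appearing in $\mathcal C_\Omega$.
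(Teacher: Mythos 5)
This lemma is not proved in the paper at all: it is quoted verbatim from Ding and Lai \cite{DL2019}, so your proposal has to be measured against their argument (a Seeger--Tao type proof adapted to non-convolution kernels $\Omega(x-y)K(x,y)$), and as a sketch of that argument it has two genuine gaps. First, you propose to handle the good part of the Calder\'on--Zygmund decomposition ``via the hypothesized $L^2$-bound restricted to the single piece $T_{\Omega_k}^K$''. The hypothesis only provides an $L^2$ bound for the \emph{full} operator $T^K_\Omega$; it says nothing about the operators $T^K_{\Omega_k}$ built from the level pieces, and since $K(x,y)$ is not a convolution kernel there is no Plancherel/Fourier-transform route to per-piece $L^2$ bounds --- producing usable $L^2$ estimates for pieces is exactly the difficulty the criterion is designed to circumvent. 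In the actual proof one performs a single Calder\'on--Zygmund decomposition of $f$, applies the assumed $L^2$ bound once, to the good function with the full $\Omega$ (this is precisely the observation the present paper exploits in Lemma \ref{lem23} when relaxing the hypothesis from $C\|\Omega\|_{L\log L}$ to $C\mathcal C_\Omega$), and treats the bad part by orthogonality/$TT^\ast$ estimates that use only the size and cancellation of the $\Omega_k$ together with the $C^\delta$ regularity of $K$; decomposing $\Omega$ first and running a separate weak-type argument for each $T^K_{\Omega_k}$ puts the $L^2$ hypothesis out of reach.

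Second, the closing step ``summing the per-level weak-type bounds in $k$'' is not legitimate: $L^{1,\infty}$ is only quasi-normed and is not countably subadditive, and summing infinitely many weak $(1,1)$ estimates costs at least a logarithmic factor in the number of pieces (Stein--Weiss), which would ruin the bound $\mathcal C_\Omega\|f\\|_{L^1}$. Seeger, Tao and Ding--Lai avoid this by excising one exceptional set of controlled measure and then summing genuine $L^1$ and $L^2$ (strong-type) estimates with geometric decay in the auxiliary frequency/angular parameters, so that the sum over $k$ is absorbed inside those strong-type bounds, producing $\sum_k(k+1)\|\Omega_k\|_{L^1}\sim\mathcal C_\Omega$ there rather than at the level of $L^{1,\infty}$. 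A smaller point: the initial reduction to $\|\Omega\|_{L^1(\sn)}=1$ does not interact cleanly with the hypothesis, since the assumption $\|T^K_\Omega\|_{L^2\to L^2}\le C\|\Omega\|_{L\log L(\sn)}$ is not invariant under $\Omega\mapsto c\Omega$ in the required direction; the normalized logarithm $\log^+\bigl(|\Omega|/\|\Omega\|_{L^1(\sn)}\bigr)$ in $\mathcal C_\Omega$ indeed comes from calibrating the level-set decomposition to $\|\Omega\|_{L^1(\sn)}$, but that calibration must be carried out inside the argument rather than by an a priori rescaling.
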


With Theorem A in hand, together with Lemma \ref{lem22}, we have the following key lemma.

\begin{lem}\label{lem23}
Let $\Omega,\Omega_1,\Omega_2\in L\log L(\sn)$ and all of them satisfy the vanishing condition $(\ref{eq1.1})$. Then $T_\Omega$ is bounded from $L^1(\rn)$ to $L^{1,\infty}(\rn)$ and $T_{\vec\Omega}$ is bounded from $L^1(\rn)\times L^1(\rn)$ to $L^{1/2,\infty}(\rn)$. They enjoy the following norm inequalities:
$$\|T_\Omega\|_{L^1(\rn)\to L^{1,\infty}(\rn)} \le C\mathcal C_\Omega \le C(\|\Omega\|_{L\log L(\sn)}+1),$$ and
$$\|T_{\vec\Omega}\|_{L^1(\rn)\times L^1(\rn)\to L^{1/2,\infty}(\rn)}\le C\prod_{i=1}^2\mathcal C_{\Omega_i} \le C\prod_{i=1}^2(\|\Omega_i\|_{L\log L(\sn)}+1).$$
\end{lem}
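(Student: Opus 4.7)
The plan is to chain together Theorem A, Lemma \ref{lem22} and an elementary weak-type Hölder inequality.

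\textbf{Linear bound.} Since $L\log L(\sn)\hookrightarrow H^1(\sn)$ continuously, Theorem A immediately upgrades to $\|T_\Omega f\|_{L^2(\rn)}\le C\|\Omega\|_{L\log L(\sn)}\|f\|_{L^2(\rn)}$. The convolution kernel $K(x,y)=|x-y|^{-n}$ is smooth off the diagonal and trivially fulfils the size and Hölder regularity conditions (with any $\delta\in(0,1]$) listed above Lemma \ref{lem22}. Therefore the hypotheses of Lemma \ref{lem22} are satisfied and I obtain $\|T_\Omega\|_{L^1(\rn)\to L^{1,\infty}(\rn)}\le C\mathcal C_\Omega$.

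\textbf{Reformulating $\mathcal C_\Omega$.} To deduce the bound $\mathcal C_\Omega\le C(\|\Omega\|_{L\log L(\sn)}+1)$ I would set $a=\|\Omega\|_{L^1(\sn)}$ and split $\sn=E_1\cup E_2$ with $E_1=\{|\Omega|\le 2a\}$ and $E_2=\{|\Omega|>2a\}$. On $E_1$ the factor $\log^+(|\Omega|/a)$ is at most $\log 2$, so the contribution is $\lesssim a$. On $E_2$ one rewrites $\log^+(|\Omega|/a)=\log|\Omega|-\log a$; the first piece is controlled by $\|\Omega\|_{L\log L(\sn)}$ via $\log|\Omega|\le \log(e+|\Omega|)$, while the second piece $-(\log a)\int_{E_2}|\Omega|\,d\sigma$ is non-positive when $a\ge 1$ and bounded by $-a\log a\le e^{-1}$ when $a<1$. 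Combining these with $a\le \|\Omega\|_{L\log L(\sn)}$ absorbs the leftover $a$ terms and yields the claim.

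\textbf{Bilinear bound.} Since $T_{\vec\Omega}(f_1,f_2)=T_{\Omega_1}(f_1)\cdot T_{\Omega_2}(f_2)$, I would appeal to the standard weak-type Hölder inequality $\|uv\|_{L^{1/2,\infty}(\rn)}\le C\|u\|_{L^{1,\infty}(\rn)}\|v\|_{L^{1,\infty}(\rn)}$ (proved by writing $\{|uv|>\lambda\}\subset\{|u|>t\}\cup\{|v|>\lambda/t\}$ and optimizing in $t>0$), applied to $u=T_{\Omega_1}(f_1)$ and $v=T_{\Omega_2}(f_2)$. Invoking the already-established linear bound on each factor then gives the stated $L^1\times L^1\to L^{1/2,\infty}$ estimate with constant $C\mathcal C_{\Omega_1}\mathcal C_{\Omega_2}$, which the previous step controls by $C\prod_{i=1}^2(\|\Omega_i\|_{L\log L(\sn)}+1)$.

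The proof is essentially assembly-work from the ingredients already on the table. The only mildly subtle point is the reformulation step: the additive constant ``$+1$'' on the right-hand side is actually necessary, since for $\Omega$ with very small $L^1$-norm (yet not identically zero) the quantity $\mathcal C_\Omega/\|\Omega\|_{L\log L}$ need not be bounded, and the $-a\log a\le e^{-1}$ contribution is precisely what is absorbed by this additive $1$.
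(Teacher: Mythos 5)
Your bilinear step (the weak-type H\"older inequality for the product) and the elementary estimate $\mathcal C_\Omega\le C(\|\Omega\|_{L\log L(\sn)}+1)$ are correct and essentially coincide with the paper's argument. The gap is in your first step. You assert that $L\log L(\sn)\hookrightarrow H^1(\sn)$ continuously, i.e.\ $\|\Omega\|_{H^1(\sn)}\le C\|\Omega\|_{L\log L(\sn)}$, so that Theorem A gives $\|T_\Omega\|_{L^2\to L^2}\le C\|\Omega\|_{L\log L(\sn)}$ and Lemma \ref{lem22} applies verbatim. With the paper's definition $\|\Omega\|_{L\log L(\sn)}=\int_\sn|\Omega|\log(e+|\Omega|)\,d\sigma$ this is exactly the point the paper flags (it explicitly says it does not know whether such an inequality holds), and in fact it cannot hold: on $\mathbb S^{1}$ take $\Omega=\chi_E-\frac{\sigma(E)}{2\pi}$ with $E$ an arc of length $\delta$. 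Then $\|\Omega\|_{L\log L(\mathbb S^{1})}\lesssim\delta$, while the multiplier $m(\xi)=\int_{\mathbb S^{1}}\Omega(\theta)\bigl(\log\frac1{|\theta\cdot\xi'|}-\frac{i\pi}{2}\,\mathrm{sgn}(\theta\cdot\xi')\bigr)\,d\sigma(\theta)$ of $T_\Omega$ satisfies $|m(\xi)|\gtrsim\delta\log(1/\delta)$ when $\xi'$ is orthogonal to the center of $E$, so $\|T_\Omega\|_{L^2\to L^2}\gtrsim\delta\log(1/\delta)\gg\delta$, and by Theorem A the claimed embedding inequality fails as well. This is consistent with your own closing observation that $\mathcal C_\Omega/\|\Omega\|_{L\log L(\sn)}$ is unbounded; the correct controlling quantity in this example is $\mathcal C_\Omega\approx\delta\log(1/\delta)$.

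The missing ingredients, which are the actual content of the paper's proof, are: (a) the inequality $\|\Omega\|_{H^1(\sn)}\le\|M\Omega\|_{L^1(\sn)}\le C\mathcal C_\Omega$, proved by a layer-cake argument using the weak $(1,1)$ bound for the Hardy--Littlewood maximal operator on $\sn$; combined with Theorem A this yields $\|T_\Omega\|_{L^2\to L^2}\le C\mathcal C_\Omega$ (not $C\|\Omega\|_{L\log L(\sn)}$); and (b) the observation that Lemma \ref{lem22} remains valid when its hypothesis $\|T_\Omega\|_{L^2\to L^2}\le C\|\Omega\|_{L\log L(\sn)}$ is relaxed to $\|T_\Omega\|_{L^2\to L^2}\le C\mathcal C_\Omega$, because the $L^2$ bound enters only in the estimate of the good part of the Calder\'on--Zygmund decomposition. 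Without (a) and (b) your appeal to Lemma \ref{lem22} does not go through; once they are supplied, the rest of your write-up (the splitting argument for $\mathcal C_\Omega$ and the product estimate) is sound.
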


\begin{proof}
Note that $H^1(\sn)$ is the Hardy space on the unit sphere and contains $L\log L(\sn)$ as a proper subspace. It is natrual to ask whether \(\|\Omega\|_{H^1(\sn)} \le C\|\Omega\|_{L\log L(\sn)}\) holds or not. We have no answer at present time. However, we can show that
\begin{equation}\label{eq2.1}
	\|\Omega\|_{H^1(\sn)} \le C\mathcal C_\Omega.
\end{equation}

To prove inequality (\ref{eq2.1}), without losing  generality, we may assume that \(\|\Omega\|_{L^1(\sn)}>0\), otherwise nothing needs to be proved. Let \(a_0=\frac1{\sigma(\sn)}\|\Omega\|_{L^1(\sn)}\). For any \(\theta\in\sn\), the Hardy-Littlewood maximal function on the unit sphere is defined by
\[M(\Omega)(\theta) = \sup_{r>0}\frac1{\sigma(\widetilde B(\theta,r))}\int_{\widetilde B(\theta,r)}|\Omega(\theta')|d\sigma(\theta'),\]
where \(\widetilde B(\theta,r) = \{\theta\in\sn:\sigma(\theta,\theta')<r\}\). Then for \(\theta\in\sn\), it holds that
\[M(\Omega)(\theta) \ge \frac1{\sigma(\sn)}\int_\sn|\Omega(\theta')|d\sigma(\theta') = a_0.\]

It was known from \cite{C1982} that \(\|\Omega\|_{H^1(\sn)} \le \|M(\Omega)\|_{L^1(\sn)}\). Therefore, to prove inequality (\ref{eq2.1}), it is enough to show that
\begin{equation}\label{eq2.2}
	\|M(\Omega)\|_{L^1(\sn)} \le C\mathcal C_\Omega.
\end{equation}
To see inequality (\ref{eq2.2}) is true, we first notice that
\begin{align*}
	\|M(\Omega)\|_{L^1(\sn)} =& \int_0^\infty\sigma(\{\theta\in\sn:M(\Omega)(\theta)>\alpha\})d\alpha \\
	\le& \int_0^\infty\sigma(\{\theta\in\sn:M(\Omega\chi_{\{|\Omega|>\alpha/2\}})(\theta)>\frac\alpha2\})d\alpha \\
	&+ \int_0^\infty\sigma(\{\theta\in\sn:M(\Omega\chi_{\{|\Omega|\le\alpha/2\}})(\theta)>\frac\alpha2\})d\alpha \\
	\le& \int_{2a_0}^\infty\sigma(\{\theta\in\sn:M(\Omega\chi_{\{|\Omega|>\alpha/2\}})(\theta)>\frac\alpha2\})d\alpha + \int_0^{2a_0}\sigma(\sn)d\alpha \\
	\le& \int_{2a_0}^\infty\frac C{\alpha}\int_{\{|\Omega|>\alpha/2\}}|\Omega(\theta)|d\sigma(\theta)d\alpha + 2\|\Omega\|_{L^1(\sn)},
\end{align*}
where the last inequality follows from the weak type \((1,1)\) boundedness of Hardy-Littlewood maximal function on the unit sphere give by Knopf \cite{K1986,K1987} and Li \cite{L2013}.

Therefore
\begin{align*}
	\|M(\Omega)\|_{L^1(\sn)} \le& C\sum_{n=0}^\infty\int_{a_02^n}^{a_02^{n+1}}\frac1\alpha\int_{\{|\Omega|>\alpha\}}|\Omega(\theta)|d\sigma(\theta)d\alpha + 2\|\Omega\|_{L^1(\sn)} \\
	\le& C\sum_{n=0}^\infty\int_{\{|\Omega|>a_02^n\}}|\Omega(\theta)|d\sigma(\theta) + 2\|\Omega\|_{L^1(\sn)} \\
	=& C\sum_{n=0}^\infty(n+1)\int_{\{a_02^n<|\Omega|<a_02^{n+1}\}}|\Omega(\theta)|d\sigma(\theta) + 2\|\Omega\|_{L^1(\sn)} \\
	\le& C\sum_{n=0}^\infty\int_{\{a_02^n<|\Omega|<a_02^{n+1}\}}|\Omega(\theta)|\log^+\frac{|\Omega(\theta)|}{a_0}d\sigma(\theta) + (C+2)\|\Omega\|_{L^1(\sn)} \\
	\le& C\left(\sum_{n=0}^\infty\int_{\{a_02^n<|\Omega|<a_02^{n+1}\}}|\Omega(\theta)|\log^+\frac{|\Omega(\theta)|}{\|\Omega\|_{L^1(\sn)}}d\sigma(\theta) + \|\Omega\|_{L^1(\sn)}\right) \\
	\le& C\mathcal C_\Omega.
\end{align*}
Then we prove inequality (\ref{eq2.2}), and thus inequality (\ref{eq2.1}) is true.

Combining inequality (\ref{eq2.1}) with inequality (\ref{eq1.2}), we get
\begin{equation}\label{eq2.3}
	\|T_\Omega\|_{L^2(\rn)\to L^2(\rn)} \le C\|\Omega\|_{H^1(\sn)} \le C\mathcal C_\Omega.
\end{equation}

By Lemma \ref{lem22}, the condition \(\|T_\Omega\|_{L^2(\rn)\to L^2(\rn)}\le C\|\Omega\|_{L\log L(\sn)}\) is a sufficient condition to obtain the weak type \((1,1)\) bound criterion of \(T_\Omega\). However, we observe that, Lemma \ref{lem22} still holds when this sufficient condition is relaxed to \(\|T_\Omega\|_{L^2(\rn)\to L^2(\rn)}\le C\mathcal C_\Omega\). The main reason is that it was only used to estimate the good function \(g\), which comes from the Calder\'on-Zygmund decomposition of \(f\). Hence, inequality (\ref{eq2.3}) and Lemma \ref{lem22} yield that
$$\|T_\Omega\|_{L^1(\rn)\to L^{1,\infty}(\rn)} \le C\mathcal C_\Omega.$$

By the definition of \(\mathcal C_\Omega\), it is obvious that
\begin{align*}
	\mathcal C_\Omega \le& \|\Omega\|_{L\log L(\sn)} + \int_\sn|\Omega(\theta)|d\sigma(\theta) + \int_\sn|\Omega(\theta)|\log^+|\Omega(\theta)|d\sigma(\theta) \\
	&+ \|\Omega\|_{L^1(\sn)}\log^+\frac1{\|\Omega\|_{L^1(\sn)}} \\
	\le& 3\|\Omega\|_{L\log L(\sn)} + \frac1{\text{e}} \le 3(\|\Omega\|_{L\log L(\sn)}+1),
\end{align*}
where the second inequality follows from two basic facts that
\[\log^+|\Omega(\theta)| \le \log(e+|\Omega(\theta)|) \quad\text{and}\quad \alpha\log^+\frac1\alpha \le \frac1{\text{e}} \quad\text{for }\alpha\in(0,1].\]

As for $T_{\vec\Omega}$, by the H\"older's inequality for weak spaces \cite[p.16]{G2014}: for \(p_j\in(0,\infty)\), \(j=1,\cdots,k\), it holds that
\[\|f_1\cdots f_k\|_{L^{p,\infty}} \le p^{-\frac1p}\prod_{j=1}^kp_j^{\frac1{p_j}}\|f_j\|_{L^{p_j,\infty}},\quad\frac1p=\frac1{p_1}+\cdots+\frac1{p_k}.\]
This inequality, together with the pointwise control $|T_{\vec\Omega}(f_1,f_2)| \le |T_{\Omega_1}(f_1)||T_{\Omega_2}(f_2)|$ gives that
\begin{align*}
	\|T_{\vec\Omega}(f_1,f_2)\|_{L^{1/2,\infty}} &\le \big\||T_{\Omega_1}(f_1)||T_{\Omega_2}(f_2)|\big\|_{L^{1/2,\infty}} \\
	&\le \sqrt2\|T_{\Omega_1}(f_1)\|_{L^{1,\infty}}\|T_{\Omega_2}(f_2)\|_{L^{1,\infty}} \\
	&\le C\prod_{i=1}^2\mathcal C_{\Omega_i}\|f_i\|_{L^1(\rn)},
\end{align*}
which finishes the proof of Lemma \ref{lem23}.
\end{proof}

Now we state some basic properties of $L\log L$ space.

\begin{lem}\label{lem24}
If $\Phi_1(\theta),\Phi_2(\theta)\in L\log L(\sn)$, then they enjoy the following properties:
\begin{enumerate}
	\item[\rm{(i)}] $\Phi_1(\theta),\Phi_2(\theta)\in L^1(\sn)$, and for $i=1,2$, $\|\Phi_i\|_{L^1(\sn)}\le\|\Phi_i\|_{L\log L(\sn)};$
	\item[\rm{(ii)}] The quasi-triangle inequality is true in $L\log L$ space:
		$$\|\Phi_1+\Phi_2\|_{L\log L(\sn)}\le4(\|\Phi_1\|_{L\log L(\sn)}+\|\Phi_2\|_{L\log L(\sn)}).$$
\end{enumerate}
\end{lem}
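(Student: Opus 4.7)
For part (i), the idea is the pointwise bound
\[
|\Phi_i(\theta)| \;\le\; |\Phi_i(\theta)|\log(e+|\Phi_i(\theta)|),
\]
which holds because $\log(e+t)\ge 1$ for every $t\ge 0$. Integrating over $\sn$ gives $\|\Phi_i\|_{L^1(\sn)}\le\|\Phi_i\|_{L\log L(\sn)}$ directly, so in particular $\Phi_i\in L^1(\sn)$.

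For part (ii), I would start from the pointwise triangle inequality $|\Phi_1+\Phi_2|\le|\Phi_1|+|\Phi_2|$, and couple it with the subadditive-type bound
\[
\log(e+a+b)\;\le\;\log(e+a)+\log(e+b),\qquad a,b\ge 0,
\]
which follows from the elementary inequality $(e+a)(e+b)\ge e+a+b$ (since $e^2+(e-1)(a+b)+ab\ge e$). Multiplying these two inequalities yields
\[
|\Phi_1+\Phi_2|\log(e+|\Phi_1+\Phi_2|)\;\le\;(|\Phi_1|+|\Phi_2|)\bigl(\log(e+|\Phi_1|)+\log(e+|\Phi_2|)\bigr).
\]

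Integrating over $\sn$ produces four terms: the two diagonal ones are exactly $\|\Phi_1\|_{L\log L(\sn)}$ and $\|\Phi_2\|_{L\log L(\sn)}$, while the cross terms have the form $\int_\sn |\Phi_i|\log(e+|\Phi_j|)d\sigma$ with $i\ne j$. To handle a cross term, I would split $\sn$ into $\{|\Phi_j|\le|\Phi_i|\}$ and its complement: on the first set $\log(e+|\Phi_j|)\le\log(e+|\Phi_i|)$ makes the contribution at most $\|\Phi_i\|_{L\log L(\sn)}$, while on the second set $|\Phi_i|\le|\Phi_j|$ bounds the contribution by $\|\Phi_j\|_{L\log L(\sn)}$. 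Summing the four pieces gives the claim (with a constant at most $4$, which matches the stated bound; in fact $3$ is enough).

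Neither step presents a real obstacle; the only mildly delicate point is the inequality $\log(e+a+b)\le\log(e+a)+\log(e+b)$, and I would record its one-line proof in the write-up so that the expansion of $|\Phi_1+\Phi_2|\log(e+|\Phi_1+\Phi_2|)$ as a sum of four controllable pieces is clearly justified.
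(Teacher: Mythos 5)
Your proof is correct. The paper states Lemma 2.4 without proof, treating it as a basic property of $L\log L(\sn)$, so there is no argument in the paper to compare against; your elementary route --- part (i) from $\log(e+t)\ge 1$, and part (ii) from $|\Phi_1+\Phi_2|\le|\Phi_1|+|\Phi_2|$ together with $\log(e+a+b)\le\log(e+a)+\log(e+b)$ and the splitting of each cross term $\int_\sn|\Phi_i|\log(e+|\Phi_j|)\,d\sigma$ over $\{|\Phi_j|\le|\Phi_i|\}$ and its complement --- is exactly the kind of self-contained verification the lemma calls for, and it even yields the constant $3$, which is stronger than the stated $4$.
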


\section{Proof of Theorem \ref{thm1}}\label{s3}

\subsection{Proof of Theorem \ref{thm1} (i)}\label{s31}

\begin{proof}


Without loss of generality, we may assume $\|f\|_{L^1(\rn)}=1$. The same assumption applies to the rest proofs of our Theorems.

For any $0<\varepsilon\ll\min\{1,\|\Omega\|_{L^1(\sn)}\}$, it is easy to see that there exists a real number $r_{\varepsilon}>1$, such that
$$\int_{B(0,r_{\varepsilon})}f(x)dx>1-\varepsilon.$$
Now let $g=f\chi_{B(0,r_{\varepsilon})},h=f\chi_{B(0,r_\varepsilon)^c}$. For $\lambda>0$, we denote
\begin{align*}
	E_\lambda &= \{x:|T_\Omega(f)(x)|>\lambda\}; \\
	E_\lambda^1 &= \{x:|T_\Omega(g)(x)|>\lambda\}; \\
	E_\lambda^2 &= \{x:|T_\Omega(h)(x)|>\lambda\}.
\end{align*}
Since $|T_\Omega|$ is sublinear, it follows that
$$E_{(1+\sqrt\varepsilon/2)\lambda}^1 \backslash E_{\sqrt\varepsilon\lambda/2}^2 \subset E_\lambda\subset E_{(1-\sqrt{\varepsilon}/2)\lambda}^1\cup E_{\sqrt{\varepsilon}\lambda/2}^2.$$


To set up the argument, we first give a decomposition for the rough kernel $\Omega$. Since $C(\sn)$ is dense in $L\log L(\sn)$, then there exists a continuous function $\widetilde\Omega_\varepsilon$ on $\sn$ such that
$$\|\Omega-\widetilde\Omega_\varepsilon\|_{L\log L(\sn)} < \frac\varepsilon{12}.$$
Take
$$\Omega_\varepsilon = \widetilde\Omega_\varepsilon - \frac1{\sigma(\sn)}\int_\sn\widetilde\Omega_\varepsilon(\theta)d\sigma(\theta).$$
Then both $\Omega_\varepsilon$ and $\Omega-\Omega_\varepsilon$ satisfy the vanishing condition again. By Lemma \ref{lem24}, we have
\begin{align*}
	\|\Omega-\Omega_\varepsilon\|_{L\log L(\sn)} =& \Big\| (\Omega-\widetilde\Omega_\varepsilon) + \frac1{\sigma(\sn)}\int_\sn\widetilde\Omega_\varepsilon(\theta)-\Omega(\theta)d\sigma(\theta) \Big\|_{L\log L(\sn)} \\
	\le& 4\left(\|\Omega-\widetilde\Omega_\varepsilon\|_{L\log L(\sn)} + \Big\|\frac{\|\Omega-\widetilde\Omega_\varepsilon\|_{L^1(\sn)}}{\sigma(\sn)}\Big\|_{L\log L(\sn)}\right) \\
	\le& 4\left(\frac\varepsilon{12} + \int_\sn\frac\varepsilon{12\sigma(\sn)}\log(e+\frac\varepsilon{12\sigma(\sn)}) \right) \le \varepsilon.
\end{align*}

On the other hand, since for all $0<\alpha<1$, $\alpha|\log\alpha| \le 4\alpha^{3/4}/\text{e}$ holds. Therefore
\begin{align*}
	\mathcal C_{\Omega-\Omega_\varepsilon} &\le 3\|\Omega-\Omega_\varepsilon\|_{L\log L(\sn)} + \|\Omega-\Omega_\varepsilon\|_{L^1(\sn)}\log^+\frac1{\|\Omega-\Omega_\varepsilon\|_{L^1(\sn)}} \\
	&\le 3\varepsilon + \frac4{\text{e}}\|\Omega-\Omega_\varepsilon\|_{L^1(\sn)}^{3/4} \le \left(3+\frac4{\text{e}}\right)\varepsilon^{3/4}.
\end{align*}

Now we denote
\[E_\lambda^{1,1} = \{x:|T_{\Omega_\varepsilon}(g)(x)|>\lambda\};\qquad E_\lambda^{1,2} = \{x:|T_{\Omega-\Omega_\varepsilon}(g)(x)|>\lambda\}.\]
One can easily deduce that
$$E_{(1+\sqrt\varepsilon)\lambda}^{1,1} \backslash E_{\sqrt\varepsilon\lambda/2}^{1,2} \subset E_{(1+\sqrt\varepsilon/2)\lambda}^1 \qquad\text{and}\qquad E_{(1-\sqrt\varepsilon/2)\lambda}^1 \subset E_{(1-\sqrt\varepsilon)\lambda}^{1,1} \cup E_{\sqrt\varepsilon\lambda/2}^{1,2},$$
which further implies that
\begin{equation}\label{eq3.1}
	E_{(1+\sqrt\varepsilon)\lambda}^{1,1} \backslash (E_{\sqrt\varepsilon\lambda/2}^{1,2} \cup E_{\sqrt\varepsilon\lambda/2}^2) \subset E_\lambda \subset E_{(1-\sqrt\varepsilon)\lambda}^{1,1} \cup (E_{\sqrt\varepsilon\lambda/2}^{1,2} \cup E_{\sqrt\varepsilon\lambda/2}^2).
\end{equation}

Here is the main idea of the proof of Theorem \ref{thm1} (i). To estimate $E_\lambda$, we need to give an upper estimate of right side in the above inequality and a lower estimate of left side in the above inequality. We split the proof into three parts. In Part 1, we will give the upper estimates of $|E_{\sqrt\varepsilon\lambda/2}^{1,2}|$ and $|E_{\sqrt\varepsilon\lambda/2}^2|$. Part 2 and Part 3 will be devoted to give the upper estimate and lower estimate of $|E_{(1-\sqrt\varepsilon)\lambda}^{1,1}|$ and $|E_{(1+\sqrt\varepsilon)\lambda}^{1,1}|$. In these two parts, the good things are that $g$ has compact support, $\Omega_\varepsilon$ is continuous function. Combining with the upper estimates in Part 1, we further give the upper estimate for $|E_\lambda|$ in Part 2. Moreover, the upper estimates in Part 1 and the lower estimate in Part 3 yield the lower estimate for $|E_\lambda|$ in Part 3.

\vspace{0.3cm}

\noindent\textbf{Part 1: Upper estimate for $|E_{\sqrt\varepsilon\lambda/2}^{1,2}|$ and $|E_{\sqrt\varepsilon\lambda/2}^2|$.}

By Lemma \ref{lem23}, one may get
$$\frac{\sqrt\varepsilon\lambda}2|E_{\sqrt\varepsilon\lambda/2}^{1,2}| \le C\mathcal C_{\Omega-\Omega_\varepsilon}\|g\|_{L^1} \le C\varepsilon^{3/4},$$
which indicates that
\begin{equation}\label{eq3.2}
	|E_{\sqrt\varepsilon\lambda/2}^{1,2}| \le C\frac{\varepsilon^{1/4}}\lambda.
\end{equation}

On the other hand, Lemma \ref{lem23} also yields that
$$\frac{\sqrt\varepsilon\lambda}2|E_{\sqrt\varepsilon\lambda/2}^2| \le C\mathcal C_\Omega\varepsilon \le C(\|\Omega\|_{L\log L(\sn)}+1)\varepsilon^{3/4},$$
which leads to
\begin{equation}\label{eq3.3}
	|E_{\sqrt\varepsilon\lambda/2}^2| \le C(\|\Omega\|_{L\log L(\sn)}+1)\frac{\varepsilon^{1/4}}\lambda.
\end{equation}

\vspace{0.3cm}

\noindent\textbf{Part 2: Upper estimate for $|E_{(1-\sqrt{\varepsilon})\lambda}^{1,1}|$.}

Since $\Omega_\varepsilon$ is continuous on $\sn$, then it is uniformly continuous on $\sn$. Hence there exists a real positive number $d_\varepsilon<\varepsilon$, such that, if $\sigma(\theta_1,\theta_2)<d_\varepsilon$, we have
$$|\Omega_\varepsilon(\theta_1)-\Omega_\varepsilon(\theta_2)| < \varepsilon,\quad\text{for }\theta_1,\theta_2\in\sn.$$
Now let $R_\varepsilon=r_\varepsilon/\arcsin(d_\varepsilon)$, then it's easy to see that for $|x|>R_\varepsilon,|y|\le r_\varepsilon$, it holds that
$$\sigma\Big(\frac{x}{|x|},\frac{x-y}{|x-y|}\Big) < d_\varepsilon.$$
See Figure \ref{omega} for 2-dimensional case.
\begin{figure}[h]
	\centering
	\includegraphics[width=12cm]{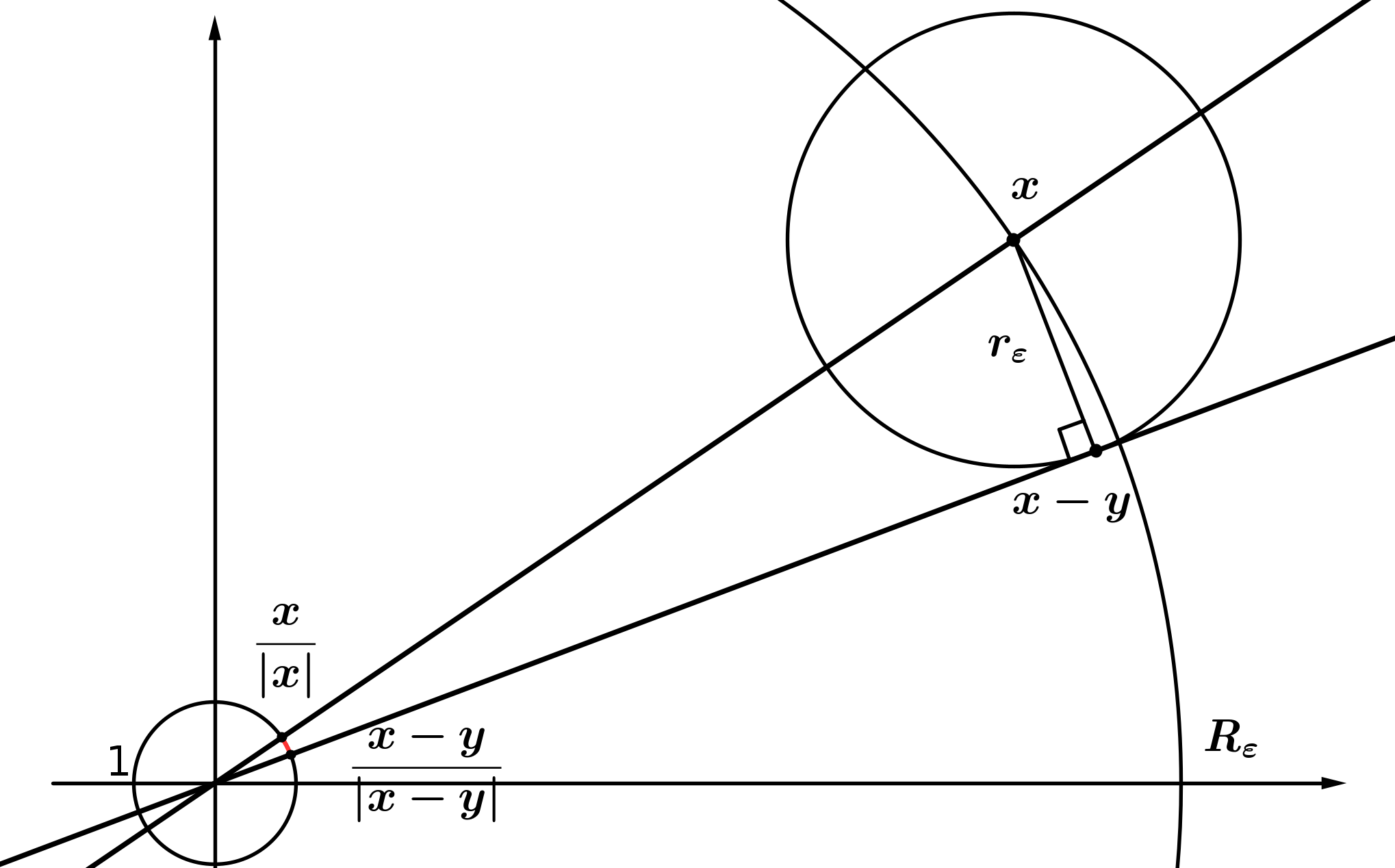}
	\caption{Suppose $\varepsilon$ is small enough, $|x|>R_\varepsilon$ and $|y|\le r_\varepsilon$. Despite the fact that $x$ and $x-y$ may be far away from each other, when we pull them back to the unit sphere, these two points are close enough.}
	\label{omega}
\end{figure}
Therefore
\begin{equation}\label{eq3.4}
	\Big|\Omega_\varepsilon\Big(\frac{x-y}{|x-y|}\Big)-\Omega_\varepsilon\Big(\frac{x}{|x|}\Big)\Big| < \varepsilon.
\end{equation}

When $|x|>R_\varepsilon$ and $y\in\text{supp }g$, it's easy to see that
$$|x-y| \ge |x|-r_\varepsilon > (1-d_\varepsilon/2)|x| > (1-\sqrt\varepsilon)|x|,$$
which means that
\begin{equation}\label{eq3.5}
	|T_{\vec\Omega_\varepsilon}(g_1,g_2)(x)| \le \int_{B(0,r_\varepsilon)}\frac{|\Omega_\varepsilon(\frac{x-y}{|x-y|})|}{|x-y|^n}g(y)dy \le \frac{|\Omega_\varepsilon(\frac x{|x|})|+\varepsilon}{(1-\sqrt\varepsilon)^n|x|^n}.
\end{equation}

Therefore, it follows from Lemma \ref{lem21} and (\ref{eq3.5}) that
\begin{align*}
	|E_{(1-\sqrt\varepsilon)\lambda}^{1,1}| &\le \Big|\Big\{|x|>R_\varepsilon:\frac{|\Omega_\varepsilon(\frac x{|x|})|+\varepsilon}{(1-\sqrt\varepsilon)^n|x|^n}>(1-\sqrt\varepsilon)\lambda\Big\}\Big| + |\overline{B(0,R_\varepsilon)}| \\
	&= \frac{\||\Omega_\varepsilon|+\varepsilon)\|_{L^1(\sn)}}{n(1-\sqrt\varepsilon)^{n+1}\lambda} + |\overline{B(0,R_\varepsilon)}|.
\end{align*}
Combining this estimate with (\ref{eq3.1}), (\ref{eq3.2}) and (\ref{eq3.3}), we obtain the upper estimate for $|E_\lambda|$:
\begin{align*}
	|E_\lambda| \le& |E_{(1-\sqrt\varepsilon)\lambda}^{1,1}| + |E_{\sqrt{\varepsilon}\lambda/2}^{1,2}| + |E_{\sqrt{\varepsilon}\lambda/2}^2| \\
	\le& \frac{\|\Omega_\varepsilon\|_{L^1(\sn)} + \sigma(\sn)\varepsilon}{n(1-\sqrt\varepsilon)^{n+1}\lambda} + |\overline{B(0,R_\varepsilon)}| + C(\|\Omega\|_{L\log L(\sn)} + 1)\frac{\varepsilon^{1/4}}\lambda.
\end{align*}
Now multiplying $\lambda$ on both sides of the above inequality and let $\lambda\to0^+$, we get
\begin{align*}
	\varlimsup_{\lambda\to0^+}\lambda|E_\lambda| &\le \frac{\|\Omega_\varepsilon\|_{L^1(\sn)}}{n(1-\sqrt\varepsilon)^{n+1}} + \frac{\sigma(\sn)\varepsilon}{n(1-\sqrt\varepsilon)^{n+1}} + C(\|\Omega\|_{L\log L(\sn)} + 1)\varepsilon^{1/4} \\
	&\le \frac{\|\Omega\|_{L^1(\sn)}+\varepsilon}{n(1-\sqrt\varepsilon)^{n+1}} + C(\|\Omega\|_{L\log L(\sn)} + 1)\varepsilon^{1/4}.
\end{align*}

Hence, by the arbitrariness of $\varepsilon$, it follows that
\begin{equation}\label{eq3.6}
	\varlimsup_{\lambda\to0^+}\lambda|E_\lambda| \le \frac{\|\Omega\|_{L^1(\sn)}}n,
\end{equation}
which gives the desired upper estimate.

\vspace{0.3cm}

\noindent{\bf Part 3: Lower estimate for $|E_{(1+\sqrt\varepsilon)\lambda}^{1,1}|$.}

When $|x|>R_\varepsilon$, $|y|\le r_\varepsilon$, by (\ref{eq3.4}), we have
$$\Big|\Omega_\varepsilon\Big(\frac x{|x|}\Big)\Big|-\varepsilon \le \Big|\Omega_\varepsilon\Big(\frac{x-y}{|x-y|}\Big)\Big|.$$
However, to give the lower control of $|T_{\Omega_\varepsilon}(g)(x)|$ by using the above inequality, we need to put the absolute value inside the integral. To overcome this obstacle, we introduce the following two auxiliary sets:
\begin{align*}
	S_\varepsilon :=& \{\theta\in\sn:|\Omega_\varepsilon(\theta)|>\varepsilon\}, \\
	V_\varepsilon :=& \big\{x\in\rn:\frac{x}{|x|}\in S_\varepsilon\big\}.
\end{align*}
Therefore for all $x\in V_\varepsilon\cap\overline{B(0,R_\varepsilon)}^c$ and $|y|\le r_\varepsilon$, $\Omega(\frac{x-y}{|x-y|})$ doesn't change its sign. Then we can give the lower control of $|T_{\Omega_\varepsilon}(g)(x)|$ as follows:
\begin{equation}\label{eq3.7}
	|T_{\vec\Omega_\varepsilon}(g_1,g_2)(x)| = \int_{B(0,r_\varepsilon)}\frac{|\Omega_\varepsilon(\frac{x-y}{|x-y|})|}{|x-y|^n}g(y)dy \ge \frac{(1-\varepsilon)(|\Omega_\varepsilon(\frac x{|x|})|-\varepsilon)}{(1+\sqrt\varepsilon)^n|x|^n},
\end{equation}
where the last inequality follows from
$$|x-y| \le |x|+r_\varepsilon < (1+d_\varepsilon/2)|x| < (1+\sqrt\varepsilon)|x|.$$

Lemma \ref{lem21} together with (\ref{eq3.7}) may lead to
\begin{align*}
	|E_{(1+\sqrt\varepsilon)\lambda}^{1,1}| \ge& |E_{(1+\sqrt\varepsilon)\lambda}^{1,1} \cap V_\varepsilon| \\
	\ge& \Big|\Big\{x\in V_\varepsilon:\frac{(1-\varepsilon)(|\Omega_\varepsilon(\frac x{|x|})|-\varepsilon)}{(1+\sqrt\varepsilon)^n|x|^n} > (1+\sqrt\varepsilon)\lambda\Big\}\Big| - |\overline{B(0,R_\varepsilon)}| \\
	=& \frac{(1-\varepsilon)\||\Omega_\varepsilon|-\varepsilon\|_{L^1(S_\varepsilon)}}{n(1+\sqrt\varepsilon)^{n+1}\lambda} - |\overline{B(0,R_\varepsilon)}|,
\end{align*}
which, combining with (\ref{eq3.1}), (\ref{eq3.2}) and (\ref{eq3.3}) further gives that
\begin{align*}
	|E_\lambda| \ge& |E_{(1+\sqrt\varepsilon)\lambda}^{1,1}| - |E_{\sqrt\varepsilon\lambda/2}^{1,2}| - |E_{\sqrt\varepsilon\lambda/2}^2| \\
	\ge& \frac{(1-\varepsilon)(\||\Omega_\varepsilon|\|_{L^1(S_\varepsilon)} - \sigma(S_\varepsilon)\varepsilon)}{n(1+\sqrt\varepsilon)^{n+1}\lambda} - |\overline{B(0,R_\varepsilon)}| - C(\|\Omega\|_{L\log L(\sn)} + 1)\frac{\varepsilon^{1/4}}\lambda,
\end{align*}
where we used an obvious fact that $\||\Omega_\varepsilon|-\varepsilon\|_{L^1(S_\varepsilon)} = \||\Omega_\varepsilon|\|_{L^1(S_\varepsilon)} - \sigma(S_\varepsilon)\varepsilon$ since $|\Omega_\varepsilon|$ is bigger that $\varepsilon$ on $S_\varepsilon$. Multiplying $\lambda$ on both sides and let $\lambda\to0^+$, we obtain that
\begin{align*}
	\varliminf_{\lambda\to0^+}\lambda|E_\lambda| &\ge \frac{(1-\varepsilon)(\|\Omega_\varepsilon\|_{L^1(S_\varepsilon)} - \sigma(S_\varepsilon)\varepsilon)}{n(1+\sqrt\varepsilon)^{n+1}} - C(\|\Omega\|_{L\log L(\sn)} + 1)\varepsilon^{1/4} \\
	&= \frac{(1-\varepsilon)\|\Omega_\varepsilon\|_{L^1(\sn)}}{n(1+\sqrt\varepsilon)^{n+1}} - \frac{\|\Omega_\varepsilon\|_{L^1(\sn\backslash S_\varepsilon)} + \sigma(S_\varepsilon)\varepsilon}{n(1+\sqrt\varepsilon)^{n+1}}\\
	&\quad - C(\|\Omega\|_{L\log L(\sn)} + 1)\varepsilon^{1/4} \\
	&\ge \frac{(1-\varepsilon)(\|\Omega\|_{L^1(\sn)}-\varepsilon)}{n(1+\sqrt\varepsilon)^{n+1}} - \frac{\sigma(\sn)\varepsilon}{n(1+\sqrt\varepsilon)^{n+1}} - C(\|\Omega\|_{L\log L(\sn)} + 1)\varepsilon^{1/4} .
\end{align*}

Since $\varepsilon$ is arbitrary and small enough, we get
\begin{equation}\label{eq3.8}
	\varliminf_{\lambda\to0^+}\lambda|E_\lambda| \ge \frac{\|\Omega\|_{L^1(\sn)}}n.
\end{equation}

Finally, from (\ref{eq3.6}) and (\ref{eq3.8}), we obtain that
$$\lim_{\lambda\to0^+}\lambda|E_\lambda|=\frac{\|\Omega\|_{L^1(\sn)}}n.$$
\end{proof}

\subsection{Proof of Theorem \ref{thm1} (ii)}\label{s32}

\begin{proof} For $\lambda>0$, we set
\begin{align*}
	F_\lambda =& \bigg\{x:\Big||T_\Omega(f)(x)| - \frac{|\Omega(\frac x{|x|})|}{|x|^n}\Big|>\lambda\bigg\}; \\
	F^1_\lambda =& \bigg\{|x|>R_\varepsilon:\Big||T_{\Omega_\varepsilon}(g)(x)|-\frac{|\Omega_\varepsilon(\frac x{|x|})|}{|x|^n}\Big|>\lambda\bigg\}; \\
	F_\lambda^2 =& \bigg\{x:\Big|\frac{|\Omega(\frac x{|x|})|-|\Omega_\varepsilon(\frac x{|x|})|}{|x|^n}\Big|>\lambda\bigg\}.
\end{align*}
Now we claim that
\begin{equation}\label{eq3.9}
	F_\lambda \subset F^1_{(1-2\sqrt\varepsilon)\lambda} \cup F_{\sqrt\varepsilon\lambda}^2 \cup E_{\sqrt{\varepsilon}\lambda/2}^{1,2} \cup E_{\sqrt{\varepsilon}\lambda/2}^2 \cup \overline{B(0,R_\varepsilon)}.
\end{equation}

To see this, it is sufficient to show the complementary set of right side is contained in $F_\lambda^c$. For any $x$ in the complementary set of right side, we have $|x|>R_\varepsilon$ and the following facts:
$$\Big|\frac{|\Omega(\frac x{|x|})|-|\Omega_\varepsilon(\frac x{|x|})|}{|x|^n}\Big| \le \sqrt\varepsilon\lambda;$$
$$|T_\Omega(h)(x)|, |T_{\Omega-\Omega_\varepsilon}(g)(x)| \le \frac{\sqrt\varepsilon}2\lambda;$$
$$\Big||T_{\Omega_\varepsilon}(g)(x)| - \frac{|\Omega_\varepsilon(\frac x{|x|})|}{|x|^n}\Big| \le (1-2\sqrt\varepsilon)\lambda.$$

From these inequalities, it's easy to deduce that
\begin{align*}
	|T_\Omega(f)(x)| &\le |T_\Omega(g)(x)| + \frac{\sqrt\varepsilon}2\lambda \le |T_{\Omega_\varepsilon}(g)(x)| + \sqrt\varepsilon\lambda \\
	&\le \frac{|\Omega_\varepsilon(\frac x{|x|})|}{|x|^n} + (1-\sqrt\varepsilon)\lambda \le \frac{|\Omega(\frac x{|x|})|}{|x|^n} + \lambda,
\end{align*}
which shows that $x\in F_\lambda^c$. This finishes the proof of the claim (\ref{eq3.9}).

Since the estimates of $|E_{\sqrt\varepsilon\lambda/2}^{1,2}|$ and $|E_{\sqrt\varepsilon\lambda/2}^2|$ have already been established in Subsection \ref{s31}, it is sufficient to consider the estimates of $|F^1_{(1-2\sqrt\varepsilon)\lambda}|$ and $|F_{\sqrt\varepsilon\lambda}^2|$. We split the proof into three parts.

\vspace{0.3cm}

\noindent\textbf{Part 1: Upper estimate for $|F_{\sqrt\varepsilon\lambda}^2|$.}

Applying Lemma \ref{lem21}, we conclude that
\begin{equation}\label{eq3.10}
	|F_{\sqrt\varepsilon\lambda}^2| = \frac{\||\Omega| - |\Omega_\varepsilon|\|_{L^1(\sn)}}{n\sqrt\varepsilon\lambda} \le \frac{\sqrt\varepsilon}{n\lambda} \le C\frac{\varepsilon^{1/4}}\lambda.
\end{equation}

\vspace{0.3cm}

\noindent\textbf{Part 2: Upper estimate for $|F^1_{(1-2\sqrt\varepsilon)\lambda} \cap V_\varepsilon|$.}

For $x\in F^1_{(1-2\sqrt\varepsilon)\lambda} \cap V_\varepsilon$, from (\ref{eq3.5}) and (\ref{eq3.7}), we know that $|T_{\Omega_\varepsilon}(g)(x)|$ and $\frac{|\Omega_\varepsilon(\frac x{|x|})|}{|x|^n}$ are between $\frac{(1-\varepsilon)(|\Omega_\varepsilon(\frac x{|x|})|-\varepsilon)}{(1+\sqrt\varepsilon)^n|x|^n}$ and $\frac{|\Omega_\varepsilon(\frac x{|x|})|+\varepsilon}{(1-\varepsilon)^n|x|^n}$, which implies that
$$\Big||T_{\Omega_\varepsilon}(g)(x)| - \frac{|\Omega_\varepsilon(\frac x{|x|})|}{|x|^n}\Big| \le \frac{|\Omega_\varepsilon(\frac x{|x|})|+\varepsilon}{(1-\sqrt\varepsilon)^n|x|^n} - \frac{(1-\varepsilon)(|\Omega_\varepsilon(\frac x{|x|})|-\varepsilon)}{(1+\sqrt\varepsilon)^n|x|^n} =: \frac{I_{\vec\Omega,\varepsilon}(\frac x{|x|})}{|x|^{2n}}.$$
Thus
$$|F_{(1-2\sqrt\varepsilon)\lambda}^1 \cap V_\varepsilon| \le \Big|\Big\{x\in V_\varepsilon:\frac{I_{\vec\Omega,\varepsilon}(\frac x{|x|})}{|x|^n}>(1-2\sqrt\varepsilon)\lambda\Big\}\Big| = \frac{\|I_{\vec\Omega,\varepsilon}\|_{L^1(S_\varepsilon)}}{n(1-2\sqrt\varepsilon)\lambda}.$$

It is easy to see that
\begin{align*}
	& \|I_{\vec\Omega,\varepsilon}\|_{L^1(S_\varepsilon)} \\
	&= \|\Omega_\varepsilon\|_{L^1(S_\varepsilon)}\left(\frac1{(1-\sqrt\varepsilon)^n}-\frac{1-\varepsilon}{(1+\sqrt\varepsilon)^n}\right) + \left(\frac1{(1-\sqrt\varepsilon)^n}+\frac{1-\varepsilon}{(1+\sqrt\varepsilon)^n}\right)\sigma(S_\varepsilon)\varepsilon \\
	&\le C(\|\Omega\|_{L^1(\sn)}+1)\sqrt\varepsilon \le C(\|\Omega\|_{L\log L(\sn)}+1)\varepsilon^{1/4}.
\end{align*}
Therefore
\begin{equation}\label{eq3.11}
	|F_{(1-2\sqrt\varepsilon)\lambda}^1 \cap V_\varepsilon| \le C(\|\Omega\|_{L\log L(\sn)}+1)\frac{\varepsilon^{1/4}}\lambda.
\end{equation}

\vspace{0.3cm}

\noindent{\bf Part 3: Upper estimate for $|F_{(1-2\sqrt\varepsilon)\lambda}^1 \cap V_\varepsilon^c|$.}

For all $|x|>R_\varepsilon$, it follows from the inequality (\ref{eq3.5}) that
$$\Big||T_{\Omega_\varepsilon}(g)(x)| - \frac{|\Omega_\varepsilon(\frac x{|x|})|}{|x|^n}\Big| \le 2\frac{|\Omega_\varepsilon(\frac x{|x|})|+\varepsilon}{(1-\sqrt\varepsilon)^n|x|^n} \le 2\frac{|\Omega_\varepsilon(\frac x{|x|})|+\varepsilon}{(1-2\sqrt\varepsilon)^n|x|^n}.$$
Then by Lemma \ref{lem21}, $|F_{(1-2\sqrt\varepsilon)\lambda}^1\cap V_\varepsilon^c|$ is dominated by
\begin{equation}\label{eq3.12}
	|F_{(1-2\sqrt\varepsilon)\lambda}^1\cap V_\varepsilon^c| \le \frac{2\||\Omega_\varepsilon|+\varepsilon\|_{L^1(\sn\backslash S_\varepsilon)}}{n(1-2\sqrt\varepsilon)^{n+1}\lambda} \le \frac{4\sigma(\sn\backslash S_\varepsilon)\varepsilon}{n(1-2\sqrt\varepsilon)^{n+1}\lambda} \le C\frac{\varepsilon^{1/4}}\lambda.
\end{equation}

Finally, by (\ref{eq3.2}), (\ref{eq3.3}), (\ref{eq3.9}), (\ref{eq3.10}), (\ref{eq3.11}) and (\ref{eq3.12}), it holds that
\begin{align*}
	|F_\lambda| \le C(\|\Omega\|_{L\log L(\sn)} + 1)\frac{\varepsilon^{1/4}}\lambda.
\end{align*}
Multiplying $\lambda$ on both sides, and by the arbitrariness of $\varepsilon$, we finally get
$$\lim_{\lambda\to0^+}\lambda|F_\lambda|=0.$$
The proof of Theorem \ref{thm1} is finished.

\end{proof}

\section{Proof of Theorem \ref{thm2}}\label{s4}

\subsection{Proof of Theorem \ref{thm2} (i)}\label{s41}

\begin{proof}
We still assume $\|f_i\|_{L^1}=1$ for $i=1,2$, $0<\varepsilon\ll\min\{1,\|\Omega_1\|_{L^1(\sn)},\|\Omega_2\|_{L^1(\sn)}\}$, and there exists a real number $\widetilde r_{\varepsilon}>1$ such that
$$\int_{B(0,\widetilde r_{\varepsilon})}f_i(x)dx>1-\varepsilon,\quad\text{for }i=1,2.$$
Set $g_i=f_i\chi_{B(0,\widetilde r_{\varepsilon})},h_i=f_i\chi_{B(0,\widetilde r_\varepsilon)^c}$. For $\lambda>0$, we denote
\begin{align*}
	G_\lambda &= \{x:|T_{\vec\Omega}(f_1,f_2)(x)|>\lambda\}; \\
	G_\lambda^1 &= \{x:|T_{\vec\Omega}(g_1,g_2)(x)|>\lambda\}; \\
	G_\lambda^2 &= \{x:|T_{\vec\Omega}(g_1,h_2)(x)|+|T_{\vec\Omega}(h_1,g_2)(x)|+|T_{\vec\Omega}(h_1,h_2)(x)|>\lambda\}; \\
	G_\lambda^3 &= \{x:|T_{\vec\Omega}(f_1,h_2)(x)|+|T_{\vec\Omega}(h_1,f_2)(x)|+|T_{\vec\Omega}(h_1,h_2)(x)|>\lambda\}.
\end{align*}
Since $|T_{\vec\Omega}|$ is sublinear, it follows that
$$G_{(1+\sqrt\varepsilon/2)\lambda}^1 \backslash G_{\sqrt\varepsilon\lambda/2}^3 \subset G_\lambda\subset G_{(1-\sqrt{\varepsilon}/2)\lambda}^1\cup G_{\sqrt{\varepsilon}\lambda/2}^2.$$

Now we decompose the rough kernel $\vec\Omega$. Using the same method as in Subsection \ref{s31}, there exists two continuous functions $\Omega_{1,\varepsilon}$ and $\Omega_{2,\varepsilon}$ on $\sn$ satisfying the vanishing condition and
$$\|\Omega_i-\Omega_{i,\varepsilon}\|_{L\log L(\sn)} < \varepsilon\quad\text{and}\quad\mathcal C_{\Omega_i-\Omega_{i,\varepsilon}}\le \left(3+\frac4{\text{e}}\right)\varepsilon^{3/4}\quad\text{for }i=1,2.$$
Denote
\begin{align*}
	& \vec\Omega_\varepsilon = (\Omega_{1,\varepsilon},\Omega_{2,\varepsilon}); \\
	& G_\lambda^{1,1} = \{ x:|T_{\vec\Omega_\varepsilon}(g_1,g_2)(x)|>\lambda\}; \\
	& G_\lambda^{1,2} = \{ x:|T_{(\Omega_{1,\varepsilon},\Omega_2-\Omega_{2,\varepsilon})}(g_1,g_2)(x)|+|T_{(\Omega_1-\Omega_{1,\varepsilon},\Omega_{2,\varepsilon})}(g_1,g_2)(x)| \\
	&\qquad\qquad +|T_{(\Omega_1-\Omega_{1,\varepsilon},\Omega_2-\Omega_{2,\varepsilon})}(g_1,g_2)(x)|>\lambda\}; \\
	& G_\lambda^{1,3} = \{ x:|T_{(\Omega_1,\Omega_2-\Omega_{2,\varepsilon})}(g_1,g_2)(x)|+|T_{(\Omega_1-\Omega_{1,\varepsilon},\Omega_2)}(g_1,g_2)(x)| \\
	&\qquad\qquad +|T_{(\Omega_1-\Omega_{1,\varepsilon},\Omega_2-\Omega_{2,\varepsilon})}(g_1,g_2)(x)|>\lambda\}.
\end{align*}
One can easily deduce that
$$G_{(1+\sqrt\varepsilon)\lambda}^{1,1} \backslash G_{\sqrt\varepsilon\lambda/2}^{1,3} \subset G_{(1+\sqrt\varepsilon/2)\lambda}^1 \qquad\text{and}\qquad G_{(1-\sqrt\varepsilon/2)\lambda}^1 \subset G_{(1-\sqrt\varepsilon)\lambda}^{1,1} \cup E_{\sqrt\varepsilon\lambda/2}^{1,2}.$$
Then it follows that
\begin{equation}\label{eq4.1}
	G_{(1+\sqrt\varepsilon)\lambda}^{1,1} \backslash (G_{\sqrt\varepsilon\lambda/2}^3 \cup G_{\sqrt\varepsilon\lambda/2}^{1,3}) \subset G_\lambda \subset G_{(1-2\sqrt\varepsilon/3)\lambda}^{1,1} \cup (G_{\sqrt\varepsilon\lambda/2}^2 \cup E_{\sqrt\varepsilon\lambda/3}^{1,2}).
\end{equation}

Similarly, we split the proof into three parts. In Part 1, we will give the upper estimates of $|G_{\sqrt\varepsilon\lambda/2}^{1,2}|$, $|G_{\sqrt\varepsilon\lambda/2}^{1,3}|$, $|G_{\sqrt\varepsilon\lambda/2}^2|$ and $|G_{\sqrt\varepsilon\lambda/2}^3|$. Part 2 and Part 3 will be devoted to give the upper estimate and lower estimate of $|G_{(1-\sqrt\varepsilon)\lambda}^{1,1}|$ and $|G_{(1+\sqrt\varepsilon)\lambda}^{1,1}|$.

\vspace{0.3cm}

\noindent\textbf{Part 1: Upper estimate for $|G_{\sqrt\varepsilon\lambda/2}^2|$, $|G_{\sqrt\varepsilon\lambda/2}^3|$, $|G_{\sqrt\varepsilon\lambda/2}^5|$ and $|G_{\sqrt\varepsilon\lambda/2}^6|$.}

By Lemma \ref{lem23}, one may get
\begin{align*}
	& \frac{\sqrt\varepsilon\lambda}6|\{x:|T_{\vec\Omega}(g_1,h_2)(x)|>\frac{\sqrt\varepsilon\lambda}6\}|^2 \le C\prod_{i=1}^2\mathcal C_{\Omega_i}\|g_1\|_{L^1}\|h_2\|_{L^1} \\
	&\le C\prod_{i=1}^2(\|\Omega_i\|_{L\log L(\sn)}+1)\varepsilon \le C\prod_{i=1}^2(\|\Omega_i\|_{L\log L(\sn)}+1)\varepsilon^{3/4},
\end{align*}
which implies
$$|\{x:|T_{\vec\Omega}(g_1,h_2)(x)|>\frac{\sqrt\varepsilon\lambda}6\}| \le C\prod_{i=1}^2(\|\Omega_i\|_{L\log L(\sn)}+1)^{1/2}\frac{\varepsilon^{1/8}}{\lambda^{1/2}}.$$
Similar estimate holds for $|T_{\vec\Omega}(h_1,g_2)|$, $|T_{\vec\Omega}(h_1,h_2)|$, $|T_{\vec\Omega}(f_1,h_2)|$ and $|T_{\vec\Omega}(h_1,f_2)|$. These estimates together with the definitions of $G_{\sqrt\varepsilon\lambda/2}^2$ and $G_{\sqrt\varepsilon\lambda/2}^3$ yield that
\begin{equation}\label{eq4.2}
	|G_{\sqrt\varepsilon\lambda/2}^2|, |G_{\sqrt\varepsilon\lambda/2}^3| \le C\prod_{i=1}^2(\|\Omega_i\|_{L\log L(\sn)}+1)^{1/2}\frac{\varepsilon^{1/8}}{\lambda^{1/2}}.
\end{equation}

On the other hand, Lemma \ref{lem23} also yields that
\begin{align*}
	& \frac{\sqrt\varepsilon\lambda}6|\{ x:|T_{(\Omega_{1,\varepsilon},\Omega_2-\Omega_{2,\varepsilon})}(g_1,g_2)(x)|>\frac{\sqrt\varepsilon\lambda}6\}|^2 \\
	&\le C\prod_{i=1}^2\|g_i\|_{L^1}\mathcal C_{\Omega_{1,\varepsilon}}\mathcal C_{\Omega_2-\Omega_{2,\varepsilon}} \\
	&\le C(\|\Omega_{1,\varepsilon}\|_{L\log L(\sn)}+1)\varepsilon^{3/4} \\
	&\le C(4\|\Omega_1\|_{L\log L(\sn)}+4\|\Omega_1-\Omega_{1,\varepsilon}\|_{L\log L(\sn)}+1)\varepsilon^{3/4} \\
	&\le C(\|\Omega_1\|_{L\log L(\sn)}+1)\varepsilon^{3/4}.
\end{align*}
Sigular result still holds for $|T_{(\Omega_1-\Omega_{1,\varepsilon},\Omega_{2,\varepsilon})}|$, $|T_{(\Omega_1-\Omega_{1,\varepsilon},\Omega_2-\Omega_{2,\varepsilon})}|$, $|T_{(\Omega_1,\Omega_2-\Omega_{2,\varepsilon})}|$ and $|T_{(\Omega_1-\Omega_{1,\varepsilon},\Omega_2)}|$. These estimates and the definitions of $G_{\sqrt\varepsilon\lambda/2}^{1,2}$, $G_{\sqrt\varepsilon\lambda/2}^{1,3}$ lead to
\begin{equation}\label{eq4.3}
	|G_{\sqrt\varepsilon\lambda/2}^{1,2}|, |G_{\sqrt\varepsilon\lambda/2}^{1,3}| \le C\sum_{i=1}^2\left(\|\Omega_i\|_{L\log L(\sn)}+1\right)^{1/2}\frac{\varepsilon^{1/8}}{\lambda^{1/2}}.
\end{equation}

\vspace{0.3cm}

\noindent\textbf{Part 2: Upper estimate for $|G_{(1-\sqrt{\varepsilon})\lambda}^{1,1}|$.}

By the same arugment as in Part 2 of Subsection \ref{s31}, there exists a real positive number $\widetilde d_\varepsilon<\varepsilon$, such that, if $\sigma(\theta_1,\theta_2)<\widetilde d_\varepsilon$, we have
$$|\Omega_{i,\varepsilon}(\theta_1)-\Omega_{i,\varepsilon}(\theta_2)| < \varepsilon,\quad\text{for }\theta_1,\theta_2\in\sn\text{ and }i=1,2.$$
Denote $\widetilde R_\varepsilon=\widetilde r_\varepsilon/\arcsin(\widetilde d_\varepsilon)$, then it's easy to see that for $|x|>\widetilde R_\varepsilon,|y_i|\le \widetilde r_\varepsilon$, it holds that
\begin{equation}\label{eq4.4}
	\left|\Omega_{i,\varepsilon}\left(\frac{x-y_i}{|x-y_i|}\right)-\Omega_{i,\varepsilon}\left(\frac{x}{|x|}\right)\right| < \varepsilon.
\end{equation}

Therefore for $|x|>\widetilde R_\varepsilon$, the same reasoning as in Part 2 of Subsection \ref{s31} gives that
\begin{equation}\label{eq4.5}
	|T_{\vec\Omega_\varepsilon}(g_1,g_2)(x)| \le \frac{(|\Omega_{1,\varepsilon}(\frac x{|x|})|+\varepsilon)(|\Omega_{2,\varepsilon}(\frac x{|x|})|+\varepsilon)}{(1-\sqrt\varepsilon)^{2n}|x|^{2n}}.
\end{equation}

Hence, by Lemma \ref{lem21} and (\ref{eq4.5}), we have
\begin{align*}
	|G_{(1-\sqrt\varepsilon)\lambda}^{1,1}| &\le \Bigg|\Bigg\{|x|>\widetilde R_\varepsilon:\frac{\prod\limits_{i=1}^2(|\Omega_{i,\varepsilon}(\frac x{|x|})|+\varepsilon)}{(1-\sqrt\varepsilon)^{2n}|x|^{2n}}>(1-\sqrt\varepsilon)\lambda\Bigg\}\Bigg| + |\overline{B(0,\widetilde R_\varepsilon)}| \\
	&= \frac{\Big\|\prod\limits_{i=1}^2(|\Omega_{i,\varepsilon}|+\varepsilon)\Big\|_{L^{1/2}(\sn)}^{1/2}}{n(1-\sqrt\varepsilon)^{n+1/2}\lambda^{1/2}} + |\overline{B(0,\widetilde R_\varepsilon)}|.
\end{align*}
Combining this estimate with (\ref{eq4.1}), (\ref{eq4.2}) and (\ref{eq4.3}), multiplying $\lambda^{1/2}$ on both sides of the above inequality and let $\lambda\to0^+$, we obtain the upper estimate as follows:
\begin{align*}
	\varlimsup_{\lambda\to0^+}\lambda^{1/2}|G_\lambda| \le& \frac{\Big\|\prod\limits_{i=1}^2(|\Omega_{i,\varepsilon}|+\varepsilon)\Big\|_{L^{1/2}(\sn)}^{1/2}}{n(1-\sqrt\varepsilon)^{n+1/2}}  \\
	&+ C\Big(\prod_{i=1}^2(\|\Omega_i\|_{L\log L(\sn)}+1)^{1/2} + \sum_{i=1}^2(\|\Omega_i\|_{L\log L(\sn)}+1)^{1/2}\Big)\varepsilon^{1/8}.
\end{align*}

Thus it remains to prove
\begin{equation}\label{eq4.6}
	\lim_{\varepsilon\to0^+}\Big\|\prod\limits_{i=1}^2(|\Omega_{i,\varepsilon}|+\varepsilon)\Big\|_{L^{1/2}(\sn)}^{1/2} = \|\Omega_1\Omega_2\|_{L^{1/2}(\sn)}^{1/2}.
\end{equation}
But there is no triangle inequality on the space $L^{1/2}(\sn)$. As a matter of fact, it enjoys the quasi-triangle inequality, which will produce a constant $2$. This is a really problem because this constant can not be ignored. Fortunately, if we add a power of $1/2$ on the outside of the $L^{1/2}(\sn)$ norm, this problem degenerates to an absolute value's triangle inequality with power $1/2$. This is exactly what we need. Hence, we have
\begin{align*}
	& \big|\|(|\Omega_{1,\varepsilon}|+\varepsilon)(|\Omega_{2,\varepsilon}|+\varepsilon)\|_{L^{1/2}(\sn)}^{1/2} - \|\Omega_1\Omega_2\|_{L^{1/2}(\sn)}^{1/2}\big| \\
	&\le \big\|(|\Omega_{1,\varepsilon}|+\varepsilon)(|\Omega_{2,\varepsilon}|+\varepsilon)-|\Omega_1\Omega_2|\big\|_{L^{1/2}(\sn)}^{1/2}.
\end{align*}
Applying this technique again, together with the H\"older's inequality, one may obtain
\begin{align*}
	& \big\|(|\Omega_{1,\varepsilon}|+\varepsilon)(|\Omega_{2,\varepsilon}|+\varepsilon) - |\Omega_1\Omega_2|\big\|_{L^{1/2}(\sn)}^{1/2} \\
	&\le \big\|(|\Omega_1-\Omega_{1,\varepsilon}|+\varepsilon)(|\Omega_{2,\varepsilon}|+\varepsilon)\big\|_{L^{1/2}(\sn)}^{1/2} + \big\||\Omega_1|(|\Omega_2-\Omega_{2,\varepsilon}|+\varepsilon)\big\|_{L^{1/2}(\sn)}^{1/2} \\
	&\le (\|\Omega_1-\Omega_{1,\varepsilon}\|_{L^1}+\sigma(\sn)\varepsilon)^{1/2}(\|\Omega_{2,\varepsilon}\|_{L^1}+\sigma(\sn)\varepsilon)^{1/2} \\
	&\quad +\|\Omega_1\|_{L^1}^{1/2}(\|\Omega_2-\Omega_{2,\varepsilon}\|_{L^1}+\sigma(\sn)\varepsilon)^{1/2}.
\end{align*}
By Lemma \ref{lem24} (i), it yields that
\begin{align*}
	& \big|\|(|\Omega_{1,\varepsilon}|+\varepsilon)(|\Omega_{2,\varepsilon}|+\varepsilon)\|_{L^{1/2}(\sn)}^{1/2} - \|\Omega_1\Omega_2\|_{L^{1/2}(\sn)}^{1/2}\big| \\
	&\le (1+\sigma(\sn))^{1/2}\varepsilon^{1/2}\cdot\sqrt2\sum_{i=1}^2\|\Omega_i\|_{L\log L(\sn)}^{1/2}.
\end{align*}
The right side of this inequality converges to $0$ as $\varepsilon\to0^+$, which implies that inequality (\ref{eq4.6}) holds.

Hence, by the arbitrariness of $\varepsilon$, it follows that
\begin{equation}\label{eq4.7}
	\varlimsup_{\lambda\to0^+}\lambda^{1/2}|G_\lambda| \le \frac{\|\Omega_1\Omega_2\|_{L^{1/2}(\sn)}^{1/2}}{n},
\end{equation}
which gives the desired upper estimate.

\vspace{0.3cm}

\noindent{\bf Part 3: Lower estimate for $|G_{(1+\sqrt\varepsilon)\lambda}^{1,1}|$.}

Here we also introduce two auxiliary sets as follows:
\begin{align*}
	\widetilde S_\varepsilon :=& \{\theta\in\sn:|\Omega_{1,\varepsilon}(\theta)|,|\Omega_{2,\varepsilon}(\theta)|>\varepsilon\}, \\
	\widetilde V_\varepsilon :=& \big\{x\in\rn:\frac{x}{|x|}\in S_\varepsilon\big\}.
\end{align*}
Then for $x\in V_\varepsilon\cap\overline{B(0,R_\varepsilon)}^c$, it follows that
\begin{equation}\label{eq4.8}
|T_{\vec\Omega_\varepsilon}(g_1,g_2)(x)| \ge \frac{(1-\varepsilon)(|\Omega_{1,\varepsilon}(\frac x{|x|})|-\varepsilon)(|\Omega_{2,\varepsilon}(\frac x{|x|})|-\varepsilon)}{(1+\sqrt\varepsilon)^{2n}|x|^{2n}}.
\end{equation}

Similarly, Lemma \ref{lem21} together with (\ref{eq4.8}) may lead to
\begin{align*}
	|G_{(1+\sqrt\varepsilon)\lambda}^{1,1}| \ge& |G_{(1+\sqrt\varepsilon)\lambda}^{1,1} \cap \widetilde V_\varepsilon| \\
	\ge& \Bigg|\Bigg\{ x\in \widetilde V_\varepsilon:\frac{(1-\varepsilon)\prod\limits_{i=1}^2(|\Omega_{i,\varepsilon}(\frac x{|x|})|-\varepsilon)}{(1+\sqrt\varepsilon)^{2n}|x|^{2n}} > (1+\sqrt\varepsilon)\lambda\Bigg\}\Bigg| - |\overline{B(0,R_\varepsilon)}| \\
	=& \frac{(1-\varepsilon)^{1/2}\Big\|\prod\limits_{i=1}^2(|\Omega_{i,\varepsilon}|-\varepsilon)\Big\|_{L^{1/2}(\widetilde S_\varepsilon)}^{1/2}}{n(1+\sqrt\varepsilon)^{n+1/2}\lambda^{1/2}} - |\overline{B(0,R_\varepsilon)}|,
\end{align*}
which, combining with (\ref{eq4.1}), (\ref{eq4.2}) and (\ref{eq4.3}), multiplying $\lambda^{1/2}$ on both sides and let $\lambda\to0^+$ further gives that
\begin{align*}
	\varliminf_{\lambda\to0^+}\lambda^{1/2}|G_\lambda| \ge& \frac{(1-\varepsilon)^{1/2}\Big\|\prod\limits_{i=1}^2(|\Omega_{i,\varepsilon}|-\varepsilon)\Big\|_{L^{1/2}(\widetilde S_\varepsilon)}^{1/2}}{n(1+\sqrt\varepsilon)^{n1/2}} \\
	& - C\Big(\prod_{i=1}^2(\|\Omega_i\|_{L\log L(\sn)}+1)^{1/2} + \sum_{i=1}^2(\|\Omega_i\|_{L\log L(\sn)}+1)^{1/2}\Big)\varepsilon^{1/8}.
\end{align*}

Now we are in a position to show that
\begin{equation}\label{eq4.9}
	\lim_{\varepsilon\to0}\Big\|\prod_{i=1}^2(|\Omega_{i,\varepsilon}|-\varepsilon)\Big\|_{L^{1/2}{(\widetilde S_\varepsilon)}}^{1/2} = \|\Omega_1\Omega_2\|_{L^{1/2}(\sn)}^{1/2}.
\end{equation}
Indeed, using the similar arguments as in Part 2, we obtain
\begin{align*}
	& \big|\|(|\Omega_{1,\varepsilon}|-\varepsilon)(|\Omega_{2,\varepsilon}|-\varepsilon)\|_{L^{1/2}(\sn)}^{1/2}-\|\Omega_1\Omega_2\|_{L^{1/2}(\sn)}^{1/2}\big| \\
	&\le (1+\sigma(\sn))^{1/2}\varepsilon^{1/2}\cdot\sqrt2\sum_{i=1}^2\|\Omega_i\|_{L\log L(\sn)}^{1/2}.
\end{align*}
Then inequality (\ref{eq4.9}) follows easily from the fact that
$$\Big\|\prod_{i=1}^2(|\Omega_{i,\varepsilon}|-\varepsilon)\Big\|_{L^{1/2}{(\sn\backslash\widetilde S_\varepsilon)}}^{1/2} \le \|4\varepsilon^2\|_{L^{1/2}(\sn\backslash\widetilde S_\varepsilon)}^{1/2} \le 2\sigma(\sn)\varepsilon.$$

Since $\varepsilon$ is arbitrary and small enough, we get
\begin{equation}\label{eq4.10}
	\varliminf_{\lambda\to0^+}\lambda^{1/2}|G_\lambda| \ge \frac{\|\Omega_1\Omega_2\|_{L^{1/2}(\sn)}^{1/2}}n.
\end{equation}

Finally, from (\ref{eq4.7}) and (\ref{eq4.10}), we obtain that
$$\lim_{\lambda\to0^+}\lambda^{1/2}|G_\lambda|=\frac{\|\Omega_1\Omega_2\|_{L^{1/2}(\sn)}^{1/2}}n.$$
\end{proof}

\subsection{Proof of Theorem \ref{thm1} (ii)}\label{s42}

\begin{proof} For $\lambda>0$, we set
\begin{align*}
	H_\lambda =& \bigg\{x:\Big||T_{\Vec\Omega}(f_1,f_2)(x)| - \frac{|\Omega_1(\frac x{|x|})\Omega_2(\frac x{|x|})|}{|x|^{2n}}\Big|>\lambda\bigg\}; \\
	H^1_\lambda =& \bigg\{|x|>\widetilde R_\varepsilon:\Big||T_{\vec\Omega_\varepsilon}(g_1,g_2)(x)|-\frac{|\Omega_{1,\varepsilon}(\frac x{|x|})\Omega_{2,\varepsilon}(\frac x{|x|})|}{|x|^{2n}}\Big|>\lambda\bigg\}; \\
	H_\lambda^2 =& \bigg\{x:\Big|\frac{|\Omega_1(\frac x{|x|})\Omega_2(\frac x{|x|})|-|\Omega_{1,\varepsilon}(\frac x{|x|})\Omega_{2,\varepsilon}(\frac x{|x|})|}{|x|^{2n}}\Big|>\lambda\bigg\}.
\end{align*}
Now we claim that
\begin{equation}\label{eq4.11}
	H_\lambda \subset H^1_{(1-2\sqrt\varepsilon)\lambda} \cup H_{\sqrt\varepsilon\lambda}^2 \cup \bigcup_{i=2,3} (G_{\sqrt{\varepsilon}\lambda/2}^{1,i} \cup G_{\sqrt{\varepsilon}\lambda/2}^i) \cup \overline{B(0,\widetilde R_\varepsilon)}.
\end{equation}

To prove (\ref{eq4.11}), it is sufficient to show that the complementary set of right side is contained in $F_\lambda^c$. For any $x\in {H^1_{(1-2\sqrt\varepsilon)\lambda}}^c \cap {H^2_{\sqrt\varepsilon\lambda}}^c \cap {G_{\sqrt{\varepsilon}\lambda/2}^{1,2}}^c \cap {G_{\sqrt{\varepsilon}\lambda/2}^2}^c \cap \overline{B(0,\widetilde R_\varepsilon)}^c$, it holds that $|x|>R_\varepsilon$ and
$$\Big|\frac{|\Omega_1(\frac x{|x|})\Omega_2(\frac x{|x|})|-|\Omega_{1,\varepsilon}(\frac x{|x|})\Omega_{2,\varepsilon}(\frac x{|x|})|}{|x|^{2n}}\Big| \le \sqrt\varepsilon\lambda;$$
$$\Big||T_{\vec\Omega_\varepsilon}(g_1,g_2)(x)|-\frac{|\Omega_{1,\varepsilon}(\frac x{|x|})\Omega_{2,\varepsilon}(\frac x{|x|})|}{|x|^{2n}}\Big| \le (1-2\sqrt\varepsilon)\lambda;$$
$$|T_{\Vec\Omega}(g_1,h_2)(x)| + |T_{\Vec\Omega}(h_1,g_2)(x)| + |T_{\Vec\Omega}(h_1,h_2)(x)| \le \frac{\sqrt\varepsilon}2\lambda;$$
$$|T_{(\Omega_{1,\varepsilon},\Omega_2-\Omega_{2,\varepsilon})}(g_1,g_2)(x)| + |T_{(\Omega_1-\Omega_{1,\varepsilon},\Omega_{2,\varepsilon})}(g_1,g_2)(x)| + |T_{(\Omega_1-\Omega_{1,\varepsilon},\Omega_2-\Omega_{2,\varepsilon})}(g_1,g_2)(x)| \le \frac{\sqrt\varepsilon}2\lambda.$$
From these inequalities, it's easy to see that
\begin{align*}
	|T_{\Vec\Omega}(f_1,f_2)(x)| &\le |T_{\vec\Omega}(g_1,g_2)(x)| + \frac{\sqrt\varepsilon}2\lambda \le |T_{\vec\Omega_\varepsilon}(g_1,g_2)(x)| + \sqrt\varepsilon\lambda \\
	&\le \frac{|\Omega_{1,\varepsilon}(\frac x{|x|})\Omega_{2,\varepsilon}(\frac x{|x|})|}{|x|^{2n}} + (1-\sqrt\varepsilon)\lambda \le \frac{|\Omega_1(\frac x{|x|})\Omega_2(\frac x{|x|})|}{|x|^{2n}} + \lambda.
\end{align*}

For $x\in {H^1_{(1-2\sqrt\varepsilon)\lambda}}^c \cap {H^2_{\sqrt\varepsilon\lambda}}^c \cap {G_{\sqrt{\varepsilon}\lambda/2}^{1,3}}^c \cap {G_{\sqrt{\varepsilon}\lambda/2}^3}^c \cap \overline{B(0,\widetilde R_\varepsilon)}^c$, we have
\begin{align*}
	|T_{\Vec\Omega}(f_1,f_2)(x)| &\ge |T_{\Vec\Omega}(g_1,g_2)(x)| - \frac{\sqrt\varepsilon}2\lambda \ge |T_{\vec\Omega_\varepsilon}(g_1,g_2)(x)| - \sqrt\varepsilon\lambda \\
	&\ge \frac{|\Omega_{1,\varepsilon}(\frac x{|x|})\Omega_{2,\varepsilon}(\frac x{|x|})|}{|x|^{2n}} - (1-\sqrt\varepsilon)\lambda \ge \frac{|\Omega_1(\frac x{|x|})\Omega_2(\frac x{|x|})|}{|x|^{2n}} - \lambda.
\end{align*}
These two estimates imply that
$$\Big||T_{\Vec\Omega}(f_1,f_2)(x)| - \frac{|\Omega_1(\frac x{|x|})\Omega_2(\frac x{|x|})|}{|x|^{2n}}\Big| \le \lambda,$$
which means that $x\in H_\lambda^c$. Then the proof of claim (\ref{eq4.11}) is finished.

It remains to estimate $|H^1_{(1-2\sqrt\varepsilon)\lambda}|$ and $|H_{\sqrt\varepsilon\lambda}^2|$. We split the proof into three parts.

\vspace{0.3cm}

\noindent\textbf{Part 1: Upper estimate for $|H_{\sqrt\varepsilon\lambda}^2|$.}

Applying Lemma \ref{lem21} and using the method as in Part 2 of Subsection \ref{s41}, we conclude that
\begin{equation}\label{eq4.12}
	\begin{aligned}
		|H_{\sqrt\varepsilon\lambda}^2| &= \frac{\||\Omega_1\Omega_2| - |\Omega_{1,\varepsilon}\Omega_{2,\varepsilon}|\|_{L^{1/2}(\sn)}^{1/2}}{n\varepsilon^{1/4}\lambda^{1/2}} \\
		&\le C\sum\limits_{i=1}^2\|\Omega_i\|_{L\log L(\sn)}^{1/2}\frac{\varepsilon^{1/4}}{\lambda^{1/2}} \\
		&\le C\sum\limits_{i=1}^2(\|\Omega_i\|_{L\log L(\sn)}+1)^{1/2}\frac{\varepsilon^{1/8}}{\lambda^{1/2}}.
	\end{aligned}
\end{equation}

\vspace{0.3cm}

\noindent\textbf{Part 2: Upper estimate for $|H_{(1-2\sqrt\varepsilon)\lambda}^1 \cap \widetilde V_\varepsilon|$.}

For $x\in H_{(1-2\sqrt\varepsilon)\lambda}^1 \cap \widetilde V_\varepsilon$, by (\ref{eq4.5}) and (\ref{eq4.8}), we know that
\begin{align*}
	\frac{(1-\varepsilon)(|\Omega_{1,\varepsilon}(\frac x{|x|})|-\varepsilon)(|\Omega_{2,\varepsilon}(\frac x{|x|})|-\varepsilon)}{(1+\sqrt\varepsilon)^{2n}|x|^{2n}} \le& |T_{\vec\Omega_\varepsilon}(g_1,g_2)(x)| \\
	\le& \frac{(|\Omega_{1,\varepsilon}(\frac x{|x|})|+\varepsilon)(|\Omega_{2,\varepsilon}(\frac x{|x|})|+\varepsilon)}{(1-\sqrt\varepsilon)^{2n}|x|^{2n}}
\end{align*}
and
\begin{align*}
	\frac{(1-\varepsilon)(|\Omega_{1,\varepsilon}(\frac x{|x|})|-\varepsilon)(|\Omega_{2,\varepsilon}(\frac x{|x|})|-\varepsilon)}{(1+\sqrt\varepsilon)^{2n}|x|^{2n}} &\le \frac{|\Omega_{1,\varepsilon}(\frac x{|x|})\Omega_{2,\varepsilon}(\frac x{|x|})|}{|x|^{2n}} \\
	&\le \frac{(|\Omega_{1,\varepsilon}(\frac x{|x|})|+\varepsilon)(|\Omega_{2,\varepsilon}(\frac x{|x|})|+\varepsilon)}{(1-\sqrt\varepsilon)^{2n}|x|^{2n}}.
\end{align*}
These controls imply that
\begin{align*}
	\Big||T_{\vec\Omega_\varepsilon}(g_1,g_2)(x)| - \frac{|\Omega_{1,\varepsilon}(\frac x{|x|})\Omega_{2,\varepsilon}(\frac x{|x|})|}{|x|^{2n}}\Big| \le& \frac{(|\Omega_{1,\varepsilon}(\frac x{|x|})|+\varepsilon)(|\Omega_{2,\varepsilon}(\frac x{|x|})|+\varepsilon)}{(1-\sqrt\varepsilon)^{2n}|x|^{2n}} \\
	&- \frac{(1-\varepsilon)(|\Omega_{1,\varepsilon}(\frac x{|x|})|-\varepsilon)(|\Omega_{2,\varepsilon}(\frac x{|x|})|-\varepsilon)}{(1+\sqrt\varepsilon)^{2n}|x|^{2n}} \\
	=&: \frac{\widetilde I_{\vec\Omega,\varepsilon}(\frac x{|x|})}{|x|^{2n}}.
\end{align*}
Thus Lemma \ref{lem21} gives that
\begin{align*}
	|H_{(1-2\sqrt\varepsilon)\lambda}^1 \cap V_\varepsilon| \le \Big|\Big\{x\in V_\varepsilon:\frac{\widetilde I_{\vec\Omega,\varepsilon}(\frac x{|x|})}{|x|^{2n}}>(1-2\sqrt\varepsilon)\lambda\Big\}\Big| = \frac{\|\widetilde I_{\vec\Omega,\varepsilon}\|_{L^{1/2}(S_\varepsilon)}^{1/2}}{n(1-2\sqrt\varepsilon)^{1/2}\lambda^{1/2}}.
\end{align*}

The same argument as in Part 2 of Subsection \ref{s32} implies that
$$\|\widetilde I_{\vec\Omega,\varepsilon}\|_{L^{1/2}(S_\varepsilon)}^{1/2} \le C\prod_{i=1}^2(\|\Omega_i\|_{L\log L(\sn)}+1)^{1/2}\varepsilon^{1/8}$$
Then
\begin{equation}\label{eq4.13}
	|H_{(1-2\sqrt\varepsilon)\lambda}^1 \cap V_\varepsilon| \le C\prod_{i=1}^2(\|\Omega_i\|_{L\log L(\sn)}+1)^{1/2}\frac{\varepsilon^{1/8}}{\lambda^{1/2}}.
\end{equation}

\vspace{0.3cm}

\noindent{\bf Part 3: Upper estimate for $|H_{(1-2\sqrt\varepsilon)\lambda}^1 \cap \widetilde V_\varepsilon^c|$.}

For all $|x|>\widetilde R_\varepsilon$, it follows from the inequality (\ref{eq4.5}) that
$$\Big||T_{\vec\Omega_\varepsilon}(g_1,g_2)(x)| - \frac{|\Omega_{1,\varepsilon}(\frac x{|x|})\Omega_{2,\varepsilon}(\frac x{|x|})|}{|x|^{2n}}\Big| \le 2\frac{(|\Omega_{1,\varepsilon}(\frac x{|x|})|+\varepsilon)(|\Omega_{2,\varepsilon}(\frac x{|x|})|+\varepsilon)}{(1-2\sqrt\varepsilon)^{2n}|x|^{2n}}.$$
Then by Lemma \ref{lem21}, $|H_{(1-2\sqrt\varepsilon)\lambda}^1\cap \widetilde V_\varepsilon^c|$ is dominated by
$$|H_{(1-2\sqrt\varepsilon)\lambda}^1\cap \widetilde V_\varepsilon^c| \le \frac{\sqrt2\Big\|\prod\limits_{i=1}^2(|\Omega_{i,\varepsilon}|+\varepsilon)\Big\|_{L^{1/2}(\sn\backslash S_\varepsilon)}^{1/2}}{n(1-2\sqrt\varepsilon)^{n+1/2}\lambda^{1/2}}.$$

Let $\theta\in\sn\backslash S_\varepsilon$, then at least one of $|\Omega_{1,\varepsilon}(\theta)|$ and $|\Omega_{2,\varepsilon}(\theta)|$ should be not more than $\varepsilon$. So we let
$$\widetilde S_\varepsilon' = \{\theta\in\sn:|\Omega_{1,\varepsilon}(\theta)| \le \varepsilon\}\quad\text{ and }\quad \widetilde S_\varepsilon'' = \{\theta\in\sn:|\Omega_{2,\varepsilon}(\theta)| \le \varepsilon\}.$$
It is obvious that $\sn\backslash \widetilde S_\varepsilon \subset \widetilde S_\varepsilon' \cup \widetilde S_\varepsilon''$, and
\begin{align*}
	\Big\|\prod\limits_{i=1}^2(|\Omega_{i,\varepsilon}|+\varepsilon)\Big\|_{L^{1/2}(\sn\backslash \widetilde S_\varepsilon)}^{1/2} &\le \Big\|\prod\limits_{i=1}^2(|\Omega_{i,\varepsilon}|+\varepsilon)\Big\|_{L^{1/2}(\widetilde S_\varepsilon')}^{1/2} + \Big\|\prod\limits_{i=1}^2(|\Omega_{i,\varepsilon}|+\varepsilon)\Big\|_{L^{1/2}(\widetilde S_\varepsilon'')}^{1/2} \\
	&\le \sum_{i=1}^2\left(\|2\varepsilon\|_{L^1(\sn)}\||\Omega_{i,\varepsilon}|+\varepsilon\|_{L^1(\sn)}\right)^{1/2} \\
	&\le C\sum_{i=1}^2(\|\Omega_i\|_{L\log L(\sn)}+\sigma(\sn)\varepsilon)^{1/2}\varepsilon^{1/2} \\
	&\le C\sum_{i=1}^2(\|\Omega_i\|_{L\log L(\sn)}+1)^{1/2}\varepsilon^{1/8}.
\end{align*}
Then we have the upper estimate for $|F_{(1-2\sqrt\varepsilon)\lambda}^1 \cap V_\varepsilon^c|$:
\begin{equation}\label{eq4.14}
	|H_{(1-2\sqrt\varepsilon)\lambda}^1 \cap \widetilde V_\varepsilon^c| \le C\sum\limits_{i=1}^2(|\Omega_i\|_{L\log L(\sn)}+1)^{1/2}\frac{\varepsilon^{1/8}}{\lambda^{1/2}}.
\end{equation}

Finally, by (\ref{eq4.2}), (\ref{eq4.3}), (\ref{eq4.11}), (\ref{eq4.12}), (\ref{eq4.13}) and (\ref{eq4.14}), it holds that
\begin{align*}
	|H_\lambda| \le C\Big(\prod_{i=1}^2(\|\Omega_{i,\varepsilon}\|_{L\log L(\sn)}+1)^{1/2} + \sum_{i=1}^2(\|\Omega_{i,\varepsilon}\|_{L\log L(\sn)}+1)^{1/2}\Big)\frac{\varepsilon^{1/8}}{\lambda^{1/2}} + |\overline{B(0,\widetilde R_\varepsilon)}|.
\end{align*}
Multiplying $\lambda^{1/2}$ on both sides, and by the arbitrariness of $\varepsilon$, we finally get
$$\lim_{\lambda\to0^+}\lambda^{1/2}|H_\lambda|=0.$$
This finishes the proof of Theorem \ref{thm2} (ii) and also completes the proof of Theorem \ref{thm2}.
\end{proof}

\bigskip

\noindent Moyan Qin

\smallskip

\noindent {\it Address:} Laboratory of Mathematics and Complex Systems
(Ministry of Education of China),
School of Mathematical Sciences, Beijing Normal University,
Beijing 100875, People's Republic of China

\smallskip

\noindent {\it E-mail:} \texttt{myqin@mail.bnu.edu.cn}

\medskip

\noindent Huoxiong Wu

\smallskip

\noindent {\it Address:} School of Mathematical Sciences, Xiamen University, Xiamen 361005, China

\smallskip

\noindent {\it E-mail:} \texttt{huoxwu@xmu.edu.cn}

\medskip

\noindent Qingying Xue

\smallskip

\noindent {\it Address:} Laboratory of Mathematics and Complex Systems
(Ministry of Education of China),
School of Mathematical Sciences, Beijing Normal University,
Beijing 100875, People's Republic of China

\smallskip

\noindent {\it E-mail:} \texttt{qyxue@bnu.edu.cn}

\end{document}